\documentclass[10pt,a4paper]{amsart}
\usepackage[T1]{fontenc}
\usepackage{fullpage} 
\usepackage{amsmath}
\usepackage{amsthm}
\usepackage{amssymb}
\usepackage{mathtools}
\usepackage[only,mapsfrom]{stmaryrd}
\usepackage{graphicx}
\usepackage{cancel} 
\usepackage{color} 
\usepackage{nccmath}
\usepackage[all]{xy} 
\usepackage{tikz}
\usetikzlibrary{arrows,calc}
\usetikzlibrary{cd}
\usepackage{nicefrac}
\usepackage{enumitem}
\usepackage[colorlinks,citecolor=blue,linkcolor=blue,urlcolor=blue,filecolor=blue,breaklinks]{hyperref}
\usepackage{caption}
\usepackage{comment}
\usepackage{tensor}

\numberwithin{equation}{section}
\numberwithin{figure}{section}

\newtheorem{thm}{Theorem}[section]
\newtheorem{athm}{Theorem}

\newtheorem{lem}[thm]{Lemma}
\newtheorem{prop}[thm]{Proposition}
\newtheorem{cor}[thm]{Corollary}

\newtheorem{ques}[thm]{Question}

\newtheorem*{thm*}{Theorem}
\newtheorem*{conj*}{Conjecture}
\newtheorem*{cor*}{Corollary}
\newtheorem*{ques*}{Question}
\newtheorem*{claim*}{Claim}

\theoremstyle{definition}
\newtheorem{rem}[thm]{Remark}
\newtheorem{defn}[thm]{Definition}

\newtheorem*{rem*}{Remark}

\newcommand{\bQ}{\mathbb{Q}}

\newcommand{\bZ}{\mathbb{Z}}

\newcommand{\Map}{\mathrm{Map}}
\newcommand{\PMap}{\mathrm{PMap}}
\newcommand{\Homeo}{\mathrm{Homeo}}

\newcommand{\RFRS}{\mathrm{RFRS}}
\newcommand{\RFRp}{\mathrm{RFR}\mathnormal{p}}

\newcommand{\PB}{\mathrm{PB}}

\newcommand{\Qrad}{\operatorname{rad}_{\bQ}}
\newcommand{\ab}{\mathrm{ab}}

\newcommand{\incl}[3][right]%
{%
\draw[<-,>=#1 hook] #2 to ($ #2!0.5!#3 $);
\draw[->] ($ #2!0.5!#3 $) to #3;%
}
\newcommand{\inclusion}[5][right]%
{%
\draw[<-,>=#1 hook] #4 to ($ #4!0.5!#5 $) node[#2,font=\small]{#3};
\draw[->,>=stealth'] ($ #4!0.5!#5 $) to #5;%
}

\renewcommand{\geq}{\geqslant}
\renewcommand{\leq}{\leqslant}

\title{Pure braid groups are $\RFRS$}

\author{Xiaolei Wu}
\address{Shanghai Center for Mathematical Sciences, Jiangwan Campus, Fudan University, No. 2005 Songhu Road, Shanghai, 200438, P.R. China}
\email{xiaoleiwu@fudan.edu.cn}

\author{Shengkui Ye}
\address{NYU Shanghai, No. 567 Yangsi West 
  Rd, Pudong New Area, Shanghai, 200124, P.R. China \\
NYU-ECNU Institute of Mathematical Sciences at NYU Shanghai, 3663 Zhongshan Road North, Shanghai, 200062, China}
\email{sy55@nyu.edu}

\subjclass[2020]{20F36, 20F65, 57K20}
\keywords{Pure braid groups, $\RFRS$ groups, $\RFRp$ groups, finite covers, planar surfaces, Fadell--Neuwirth sequence}
\date{August 2026}

\begin{document}

\begin{abstract}
Agol in his 2014 ICM proceedings article \cite[Question 11]{Agol14} asks whether braid groups are (virtually) $\RFRS$. We answer this positively by showing that pure braid groups are $\RFRS$. As a consequence, several families of Artin groups are virtually $\RFRS$, including those of type $A_n$ (the braid groups), $B_n=C_n$,  $\widetilde A_n$, and $\widetilde C_n$.  Our results also provide evidence toward the  problem of whether braid groups, and more generally Artin groups, are virtually special; see  \cite[Problem 9.4]{HagWi10}, \cite[Problem 13.4]{Wise14}.

\end{abstract}

\maketitle

\section*{Introduction}

Residual properties of groups play a central role in understanding their structure and applications to topology and geometry. Among these, the class of \emph{residually finite rationally solvable} ($\RFRS$) groups, introduced by Agol \cite{Agol08}, has been particularly influential due to its connections with virtual properties of $3$-manifolds \cite{Agol14} and fibering \cite{Kielak20}. Recall that a group $G$ is called $\RFRS$ if there is a descending chain $G=G_0 \geq G_1 \geq G_2 \geq \cdots$
of finite-index normal subgroups of $G$ with trivial intersection, such that for
each $i$ the subgroup $G_{i+1}$ contains the kernel of $
G_i \longrightarrow H_1(G_i;\mathbb Q)$.

The braid groups $B_n$ and their subgroups, the pure braid groups $PB_n$, are fundamental objects in algebraic and geometric topology. It is known that braid groups are linear \cite{Bigelow01, Krammer02}  and left-orderable \cite{Dehornoy94}, while pure braid groups are poly-free and residually torsion-free nilpotent \cite{FalkRan88}. Agol asks in \cite[Question 11]{Agol14}  whether braid groups are $\RFRS$. This cannot be true since $\RFRS$ groups are locally indicable \cite[Proposition 2.7]{Fisher24}, but the commutator subgroups of braid groups $B_n$ are finitely generated and perfect once $n\geq 5$ \cite[Theorem 2.1]{GorinLin69}. In \cite[\S 7]{WuYe2025}, we  modified his question to the following:

\begin{ques}\label{ques:brd-rfrs}
    Are braid groups virtually $\RFRS$?
\end{ques}

Our main result answers this modified question affirmatively.

\begin{athm} \label{athm-rfrs}  
    The pure braid groups are $\RFRS$. In fact, they are $\RFRp$ for any prime $p$.
\end{athm}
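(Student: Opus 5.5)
The plan is to prove the stronger statement that $\PB_n$ is $\RFRp$ for a fixed prime $p$, and then obtain $\RFRS$ from the standard implication $\RFRp\Rightarrow\RFRS$. Here $\RFRp$ means a chain of finite-index subgroups, normal in $\PB_n$, with trivial intersection, such that each $G_i/G_{i+1}$ is an elementary abelian $p$-group and $G_{i+1}\supseteq\ker\bigl(G_i\to H_1(G_i;\bQ)\bigr)$. I will take the canonical choice
\[
G_{i+1}=\ker\bigl(G_i\to (H_1(G_i;\bZ)/\mathrm{tors})\otimes\bZ/p\bigr).
\]
These subgroups are characteristic in $G_i$, hence normal in $\PB_n$, and they satisfy both conditions by construction. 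The whole content of the theorem is then that $\bigcap_i G_i=1$.

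I will argue by induction on $n$ using the Fadell--Neuwirth short exact sequence
\[
1\to F_{n-1}\to \PB_n\to \PB_{n-1}\to 1 .
\]
This sequence splits, and $\PB_{n-1}$ acts trivially on $H_1(F_{n-1})$. Let $N_i=G_i\cap F_{n-1}$ and let $\bar G_i$ be the image of $G_i$ in $\PB_{n-1}$. Geometrically, $G_i$ is the fundamental group of a finite cover of the configuration space. This cover fibres over a finite cover of the configuration space of $n-1$ points, and the fibre $\Sigma_i$ is a finite cover of the $(n-1)$-punctured plane with $\pi_1(\Sigma_i)=N_i$. Two things must be shown.
\begin{enumerate}
\item[(a)] $\bar G_i$ is contained in (in fact eventually deeper than) the $i$-th term of the analogous chain for $\PB_{n-1}$, so the images intersect trivially by induction. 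This follows from the fact that $\PB_n\to\PB_{n-1}$ has a section, so $H_1(\bar G_i;\bQ)$ is a direct summand of $H_1(G_i;\bQ)$ in a compatible way.
\item[(b)] $\bigcap_i N_i=1$, using the free-group analogue: for a free group the rational-mod-$p$ derived series intersects trivially, since free groups are residually $p$ and each step is the elementary abelian $p$-quotient of a free group.
\end{enumerate}

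The main obstacle is (b). Each step of the chain only kills what is visible in $H_1(G_i;\bQ)$, and what $N_i$ contributes there is the image of $H_1(\Sigma_i;\bQ)$, namely the coinvariants of the monodromy action of $\bar G_i$. In general this is a proper quotient, so $N_{i+1}$ could be strictly larger than the free-group mod-$p$ step applied to $N_i$.

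My first attempt to overcome this is to show that the monodromy acts on $H_1(\Sigma_i;\bQ)$ with enough invariants. The key inputs are:
\begin{itemize}
\item the monodromy is by mapping classes of the planar surface lifted to $\Sigma_i$, and these fix the peripheral structure;
\item the Fadell--Neuwirth section supplies further sections of the covers, through the extra "boundary" point at infinity.
\end{itemize}
Using these, I would find retractions $G_i\to N_i'$ onto free subgroups, or homomorphisms to surface groups obtained by forgetting strands and filling punctures. Composed with the mod-$p$ quotients of those free groups, they would separate any prescribed nontrivial element of $F_{n-1}$.

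Concretely, I would reduce to showing the following: for every nontrivial $g\in F_{n-1}$ there is a $p$-power tower of covers of the punctured plane, each step an elementary abelian $p$-cover, which
\begin{itemize}
\item is invariant under $\PB_{n-1}$;
\item at every stage is induced by a homology class that is monodromy-invariant, hence nonzero in $H_1(G_i;\bQ)$;
\item eventually excludes $g$.
\end{itemize}
Invariance should come from the triviality of the action on $H_1(F_{n-1})$ at the first stage. At later stages I would use that the relevant covers are pulled back from covers branched over the punctures. This makes the monodromy act through a group that fixes the classes of lifted peripheral loops, and those classes generate enough of $H_1(\Sigma_i;\bQ)$ to detect $g$.
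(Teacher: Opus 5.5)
\textbf{There is a genuine gap: step (b) is never proved.}

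Your reduction is sound as far as it goes. The canonical chain is the fastest-descending $\RFRp$ chain, so the theorem is equivalent to $\bigcap_i G_i=1$. Step (a) already follows from functoriality, since any homomorphism maps the $i$-th term of the chain into the $i$-th term. You also correctly locate the difficulty in (b): $N_i$ contributes only the $\bar G_i$-coinvariants of $H_1(\Sigma_i)$. But your last paragraph asserts exactly what must be shown, and the mechanisms you sketch do not supply it.

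\emph{Non-planar covers.} A general elementary abelian $p$-cover of a punctured disk is not planar. Already for $\PB_3$ and $p=3$, $N_1$ is the full mod-$3$ homology cover of a pair of pants, which is a torus with $9$ boundary components. So lifted peripheral classes do not span $H_1(\Sigma_i;\bQ)$, and nothing in your argument controls the monodromy on the remaining part.

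\emph{No detection argument.} Even if the peripheral classes did span, you give no reason why some lift of a null-homologous $g$ eventually becomes nonzero in $H_1(-;\bZ/p)$ of a cover in the tower. A test case is the commutator of the two free generators of $F_2\le\PB_3$: it is Brunnian, so every strand-forgetting map kills it. Producing such a lift is the real content of the theorem.

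\emph{Wrong normalization on the base.} Asking every stage to be invariant under all of $\PB_{n-1}$ is too much. Lifts of pure braids to even a single cyclic cover generally permute the lifts of a boundary component by deck transformations, so the full $\PB_{n-1}$-coinvariants identify those lifts. The base must shrink along with the fiber, in an $\RFRp$-legal way, and you do not say how.

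\emph{Homology versus characters.} ``Monodromy-invariant, hence nonzero in $H_1(G_i;\bQ)$'' is a non sequitur. For a transvection $e_2\mapsto e_2+e_1$, the invariant vector $e_1=(b-1)e_2$ dies in the coinvariants. The next step of the tower needs a $\bar G_i$-invariant integral character of $N_i$, and detection needs $[g]$ to survive in the coinvariants.

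\textbf{How the paper fills these holes.}

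\emph{Planarity.} It uses only arc-cyclic $p$-covers, given by algebraic intersection $\lambda_a$ with an arc joining two distinct boundary components. These stay planar by an Euler characteristic count (Lemma~\ref{lem:cyclic-planar}), so $H_1$ of every cover in the tower is spanned by boundary classes.

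\emph{Boundary control.} On the base it defines homomorphisms $e_C\colon B\to\bZ$, recording the deck translation of the canonical lift at a point over each boundary component $C$. It then passes to $B^+=\bigcap_C\ker(e_C\bmod p)$. This subgroup contains $\Qrad(B)$ and has elementary abelian $p$-quotient. Its lifts fix every boundary component of the cover pointwise, hence act trivially on its $H_1$, which gives $H_1(K_r\rtimes B_r)\cong H_1(K_r)\oplus H_1(B_r)$ (Theorem~\ref{lem:boundary-correction}).

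\emph{Detection.} The statement you are missing is Theorem~\ref{athm:maintower}. While the lift of $f$ is null-homologous, a minimal representative has an essential simple self-intersection subloop $\beta$. An arc crossing $\beta$ once gives an arc-cyclic $p$-cover to which $f$ lifts, but in which the two branches at that double point separate; this strictly lowers the self-intersection number. Once the class is nonzero, write it as $p^k u$ with $u\notin pH_1$. An arc with $\lambda_a(u)\not\equiv 0\pmod p$ then lowers the $p$-divisibility, and repeating this makes the class nonzero mod $p$.

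These pieces are assembled through the partial-tower criterion. Elements not in $F_{n-1}$ are handled by pulling back towers from $\PB_{n-1}$.
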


See  \cite{KoberdaSuciu2020} and \S \ref{sec:par-RFRS} for more information about $\RFRp$ groups. A related question of Wise asks whether braid groups are virtually special \cite[Problem 13.4]{Wise14}.  More generally,  Haglund and Wise ask in \cite[Problem 9.4]{HagWi10} whether all Artin groups are virtually special. Since virtually special groups are virtually $\RFRS$, it makes sense to also ask the following question.

\begin{ques}
    Are Artin groups virtually $\RFRS$?
\end{ques}

We have the following partial results.

\begin{cor}\label{cor-artin-rfrs}
    The Artin groups of type $A_n$ (these are the braid groups), $B_n=C_n$, $I_2(n)$, $\tilde{A}_n, \tilde{C}_n$  are virtually $\RFRp$ for any prime  $p$.
\end{cor}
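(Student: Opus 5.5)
We deduce Corollary \ref{cor-artin-rfrs} from Theorem \ref{athm-rfrs} by showing that each listed Artin group contains, up to finite index, a subgroup of a pure braid group. We first record two formal properties of the class of $\RFRp$ groups.
\begin{enumerate}
\item[(i)] It is closed under passing to subgroups. Given an $\RFRp$ chain $G_i$ for $G$, the chain $H\cap G_i$ works for $H\leq G$, since the $p$-abelian quotient of $H\cap G_i$ maps compatibly.
\item[(ii)] It is closed under finite direct products with $\bZ$.
\end{enumerate}
With these, virtual $\RFRp$ passes to any group commensurable with a subgroup of a pure braid group, and also to products with $\bZ$.

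We now go through the types.
\begin{itemize}
\item \textbf{Type $A_n$.} The Artin group is $B_{n+1}$, and $\PB_{n+1}$ has finite index in it. Theorem \ref{athm-rfrs} applies directly.
\item \textbf{Type $B_n=C_n$.} This Artin group is isomorphic to the subgroup of $B_{n+1}$ of braids whose permutation fixes the last strand, a classical fact due to Crisp and Allcock. So it contains $\PB_{n+1}$ with finite index.
\item \textbf{Type $\tilde A_{n-1}$.} By Kent--Peifer (and Allcock), there is a splitting $A(B_n)\cong A(\tilde A_{n-1})\rtimes\bZ$. Hence $A(\tilde A_{n-1})$ embeds in $A(B_n)$, which is virtually $\PB_{n+1}$.
\item \textbf{Type $\tilde C_n$.} Allcock shows that $A(\tilde C_n)$ is the finite-index subgroup of $B_{n+2}$ (equivalently, of the annular braid group) consisting of braids fixing two chosen strands setwise/pointwise. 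So it contains $\PB_{n+2}$ with finite index.
\item \textbf{Type $I_2(n)$.} This group has the form $\langle a,b\mid \mathrm{prod}(a,b;n)=\mathrm{prod}(b,a;n)\rangle$. It is virtually $F_k\times\bZ$: the center is generated by $\Delta$ or $\Delta^2$, and the quotient is virtually free. Free groups are subgroups of $\PB_3\cong F_2\times\bZ$, so $F_k\times\bZ$ embeds in $\PB_3\times\bZ$. The latter is $\RFRp$ by Theorem \ref{athm-rfrs} and (ii); alternatively, $F_k\times\bZ$ is directly residually $p$ and RAAG-like.
\end{itemize}

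The main obstacle is citing the correct identifications for the affine types $\tilde A_n$ and $\tilde C_n$ with braid-group subgroups. The $\tilde C_n$ case is the most delicate, since one must check that the relevant subgroup of $B_{n+2}$ has finite index, or at least lies inside a virtually pure braid group. Each of the other steps is routine once (i) and (ii) are established.
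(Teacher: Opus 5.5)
\textbf{The $\tilde C_n$ step relies on a false identification.} You claim that $A(\tilde C_n)$ is a finite-index subgroup of $B_{n+2}$ and therefore contains $\PB_{n+2}$. Neither is true.

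Any finite-index subgroup of $B_{n+2}$ meets the infinite cyclic group generated by the full twist $\Delta^2$ nontrivially. Since $\Delta^2$ is central in $B_{n+2}$, such a subgroup has nontrivial center. But $A(\tilde C_n)$ has trivial center, as irreducible Euclidean Artin groups are centerless. A degenerate example makes this concrete: when the diagram is a single $\infty$-edge, the Artin group is $F_2$. The subgroup of $B_3$ fixing two strands pointwise, however, is $\PB_3\cong F_2\times\bZ$. So the inference ``so it contains $\PB_{n+2}$ with finite index'' fails.

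What is true is an infinite-index embedding. Let $B_{2,n}\le B_{n+2}$ be the pointwise stabilizer of two strands. The Fadell--Neuwirth fibration forgetting the other $n$ strands gives
$1\to B_n(D\setminus\{p_1,p_2\})\to B_{2,n}\to \PB_2\cong\bZ\to 1$.
$A(\tilde C_n)$ is realized as this kernel, the $n$-strand braid group of the twice-punctured disk. Equivalently, and this is the paper's route, Charney--Crisp embed $A(\tilde C_n)$ in $\Map(\Sigma_{n+3})$, and $\PMap(\Sigma_{n+3})$ is a subgroup of $\PB_{n+2}\cong \PMap(\Sigma_{n+3})\times\bZ$.

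An embedding is all you need. The intersection $A(\tilde C_n)\cap \PB_{n+2}$ (respectively $A(\tilde C_n)\cap\PMap(\Sigma_{n+3})$) has finite index in $A(\tilde C_n)$ and is $\RFRp$ by your (i). Your closing hedge (``or at least lies inside a virtually pure braid group'') points the right way, but the fact you actually cite must be replaced.

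With that correction, your proof has the same outline as the paper's, and the remaining cases are correct. They differ from the paper only in their inputs:
\begin{itemize}
\item For $B_n$, you use the finite-index one-strand stabilizer in $B_{n+1}$ instead of Crisp's embedding into $A(A_{2n})$.
\item For $I_2(n)$, your elementary argument replaces the paper's citation of virtual specialness. A central extension of a virtually free group is virtually $F_k\times\bZ\le F_2\times\bZ\cong\PB_3$, so your (ii) is not even needed.
\end{itemize}
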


Note that mapping class groups of surfaces are not virtually $\RFRS$ in general. In fact,  since   fibered 3-manifold groups embed into punctured mapping class groups and RFRSness passes to subgroups, it suffices to find a fibered closed $3$-manifold whose fundamental group is not virtually $\RFRS$. For example, one can take the mapping torus of a Dehn twist on a closed surface of genus at least $2$; its fundamental group is not virtually $\RFRS$ by \cite[Theorem 1.1]{Liu13} and \cite[Theorem 3.7]{MaBe96}.

Our proof of Theorem \ref{athm-rfrs}  uses the following planar lifting of curves which might be of independent interest. Recall that a compact surface is \emph{planar} if it has genus $0$  and nonempty boundary.

\begin{athm}\label{athm:maintower}
Let $S$ be a compact connected oriented planar surface with nonempty boundary, and let $1\ne f\in\pi_1(S)$.  For any prime $p$,  there is a finite tower of arc-cyclic $p$-covers 
\[
        S_r\longrightarrow S_{r-1}\longrightarrow \cdots\longrightarrow S_0=S
\]
such that $f$ lifts to closed loops throughout the whole tower, $S_i$ is planar  for all $i$, and the final lift $f_r\in\pi_1(S_r)$ has nonzero image in $H_1(S_r;\mathbb{Z}/p)$.
\end{athm}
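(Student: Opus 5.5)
The plan is to induct on a combinatorial length of $f$ relative to a system of disjoint arcs, and to use one arc-cyclic $p$-cover per step to make that length drop strictly. Throughout I take an \emph{arc-cyclic $p$-cover} of a planar surface $S$ to be the $\bZ/p$-cover classified by $\varphi_\alpha\colon\pi_1(S)\to\bZ/p$, the algebraic intersection number mod $p$ with a properly embedded arc $\alpha$ whose endpoints lie on two distinct boundary components.

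\textbf{Planarity and the arc system.} Let $S$ have $n\geq 1$ boundary components. If $n=1$, $S$ is a disk, $\pi_1(S)$ is trivial, and there is nothing to prove; so assume $n\geq 2$. The cover $\widetilde S\to S$ defined by $\varphi_\alpha$ is planar by an Euler characteristic count. We have $\chi(\widetilde S)=p(2-n)$. The two boundary components meeting $\alpha$ map to $\pm1$ and each lifts to a single component, since $p$ is prime. The remaining $n-2$ components lift to $p$ components each. Solving
\[
p(2-n)=2-2g-\bigl(2+p(n-2)\bigr)
\]
gives $g=0$.

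Next fix $n-1$ disjoint arcs $\alpha_1,\dots,\alpha_{n-1}$ cutting $S$ into a disk $D$. Then $\pi_1(S)$ is free on the dual generators $x_1,\dots,x_{n-1}$, and each $\varphi_{\alpha_j}$ is the exponent sum of $x_j$ mod $p$. Represent $f$ by a cyclically reduced word and set $\ell(f)$ to be its length, i.e.\ the geometric intersection number of the tight loop with $\bigcup\alpha_i$. Since $f\neq 1$, $\ell(f)\geq 1$.

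\textbf{Inductive step.} If $f$ is nonzero in $H_1(S;\bZ/p)$ we stop with $r=0$. Otherwise every exponent sum of $f$ is $0$ mod $p$. Pick any $j$ with $x_j$ occurring in $f$, and pass to the cover $S_1$ defined by $\varphi_{\alpha_j}$. Since $\varphi_{\alpha_j}(f)=0$, $f$ lifts to a closed loop $f_1$, and $f_1\neq 1$ because covering maps are injective on $\pi_1$.

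The preimage of $D$ in $S_1$ is $p$ disks $D_0,\dots,D_{p-1}$. The $p$ lifts of $\alpha_j$ glue $D_k$ to $D_{k+1}$ cyclically, and the lifts of each $\alpha_i$ with $i\neq j$ are further arcs. Now discard one lift $\beta$ of $\alpha_j$, keeping all lifts of the $\alpha_i$ with $i\neq j$ and the other $p-1$ lifts of $\alpha_j$. Cutting along the kept arcs leaves the $D_k$ glued in a chain along $\beta$ alone, which is a disk. So the kept arcs form a valid system for the planar surface $S_1$, and there are $p(n-2)+1=\operatorname{rank}\pi_1(S_1)$ of them, as they should be.

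Measured against this system, $\ell(f_1)$ is at most the crossings of $f$ with $\alpha_i$, $i\neq j$, plus its crossings with the $p-1$ kept lifts of $\alpha_j$. In other words it is at most $\ell(f)$ minus the number of crossings of $f_1$ with $\beta$. Therefore I choose to discard the lift $\beta$ of $\alpha_j$ that $f_1$ crosses most often. Since $f$ crosses $\alpha_j$ at least once, $f_1$ crosses $\beta$ at least once, and hence $\ell(f_1)\le \ell(f)-1$.

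\textbf{Termination.} Iterate with $(S_1,f_1)$ and the new arc system. Lengths are positive integers that strictly decrease, so after finitely many steps the lift $f_r$ must be nonzero in $H_1(S_r;\bZ/p)$. Each step is an arc-cyclic $p$-cover of a planar surface, so every $S_i$ is planar.

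\textbf{Main obstacle.} I expect the delicate point to be the length bookkeeping. One must check that discarding one lift of $\alpha_j$ really produces a disk complement, and that the word of $f_1$ read off from crossings with the kept arcs bounds its reduced length in the new free basis. Further cancellation can only help, but I would verify carefully that each crossing of $f_1$ with $\beta$ disappears from the count.
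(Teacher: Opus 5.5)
Your strategy works and differs genuinely from the paper's, but the central bookkeeping step is stated backwards and is false for $p\geq 3$ (for $p=2$ the two readings coincide). If you discard one lift $\beta$ of $\alpha_j$, you keep the other $p-1$ lifts plus the $p(n-2)$ lifts of the $\alpha_i$, $i\neq j$. That is $p(n-2)+p-1$ arcs, not $p(n-2)+1$. Cutting $S_1$ along them reglues only the two disks adjacent to $\beta$, so the complement is $p-1$ disks; indeed $\chi(S_1)+p(n-2)+p-1=p-1$. For odd $p$ the kept arcs therefore do not cut $S_1$ into a disk, there is no dual free basis, and ``$\ell(f_1)$ measured against this system'' is undefined.

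The fix is to reverse the roles: keep exactly \emph{one} lift of $\alpha_j$ and discard the other $p-1$. The disks $D_0,\dots,D_{p-1}$ are then glued in a chain along the discarded lifts, which gives a disk, and the count $p(n-2)+1=\operatorname{rank}\pi_1(S_1)$ is correct. The lift of the tight loop meets the full preimage of the arcs exactly $\ell(f)$ times. Reading it off against the kept arcs gives $\ell(f_1)\leq \ell(f)-c$, where $c$ is the number of crossings with the discarded lifts. To force $c\geq 1$, keep the lift of $\alpha_j$ that is crossed \emph{least} often: the $p$ lifts are crossed at least once in total, so the minimum is strictly below the total.

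You should also record that every arc of a disk-cutting system in a planar surface is non-separating. An arc with both ends on one boundary component separates a planar surface, so each arc joins two distinct boundary components. This makes the next cover a legitimate arc-cyclic $p$-cover and keeps your Euler characteristic count valid at every stage.

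With this repair your proof is complete, and it takes a different route from the paper. The paper works in two phases. First it lowers the minimal self-intersection number of the free homotopy class, using an arc that crosses an innermost essential simple subloop once. This needs $[f_i]=0$ in $H_1(S_i;\bZ)$ so that the arc's character kills $f_i$, and it runs until the lift is integrally nonzero (Theorem~\ref{thm:tower}). Second, it lowers the $p$-divisibility of that integral class using arcs $a$ with $\lambda_a(u)\not\equiv 0 \pmod p$.

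Your argument replaces both phases with one complexity, the cyclically reduced length in a free basis dual to an arc system, and one step that only needs $f_i$ to vanish in $H_1(S_i;\bZ/p)$. This avoids the minimal-position and innermost-subloop lemmas entirely and gives the explicit bound $r<\ell(f)$ on the tower height. It also works verbatim for arc-cyclic $m$-covers with $\bZ/m$ coefficients for any $m\geq 2$, which recovers Theorem~\ref{thm:tower} as well. The price is that your complexity depends on an auxiliary arc system rebuilt at each stage, whereas the paper's self-intersection count is intrinsic to the curve.
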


The study of curve lifting has a long history.  Scott \cite{Sc78} proves that any curve on a compact surface can be lifted to a simple closed curve in some finite cover.  Malestein--Putman \cite[Lemma 2.1]{MP10} proves that for any non-nullhomotopic closed curve on an orientable compact surface, there is a degree-8 normal cover such that either the curve is not liftable or it has a lift with a smaller geometric intersection number. Since a surface group $\pi_1(S)$ (where $S$ is not the real projective plane, the Klein
bottle, nor the non-orientable surface of Euler characteristic $-1$) embeds as a subgroup into a right-angled Artin group (RAAG) and a RAAG has $\RFRp$ (see \cite[Theorem 3]{MR2077673}, \cite[Proposition 1.1]{KoberdaSuciu2020}), any curve on the compact surface $S$ can be lifted to a homologically non-trivial closed curve in the final surface of some tower of $\mathbb{Z}/p$-covers (see Corollary \ref{curves}).
Theorem \ref{athm:maintower} says that for planar surfaces such a lifting can be realized in arc-cyclic covers (see \S \ref{sec:cyc-dul-cov} for the definition), and the homological groups can be chosen to be with $\mathbb{Z}/p$-coefficients. 

The proof of Theorem \ref{athm-rfrs} proceeds via a combination of geometric unwrapping in planar surfaces and algebraic control of isotopical actions. In \S~\ref{sec:par-RFRS}, we recall the criterion that a finitely generated group is $\RFRS$ (or $\mathrm{RFR}p$) if and only if every nontrivial element is detected by a finite partial tower of subgroups with prescribed rational (or mod~$p$) homology quotients. To exploit this criterion for pure braid groups, we rely on the Fadell--Neuwirth splitting
\[
PB_n \cong F_{n-1}\rtimes PB_{n-1},
\]
where $F_{n-1}$ is identified with the fundamental group of a compact planar surface $S_{n-1}$. The heart of the proof is a purely topological unwrapping result (Theorem \ref{athm:maintower}). This is established in \S\ref{sec:cover} by successively resolving self-intersections of a minimal immersed representative via carefully chosen arcs dual to an innermost essential subloop. Alongside this geometric construction, we must ensure that the action of $PB_{n-1}$ on $S_{n-1}$ is compatible with the tower of covers. This compatibility is achieved in \S\ref{sec:b-contr} through a boundary control argument: for any isotopical action on a planar surface and any arc-cyclic cover, passing to a suitable finite-index subgroup of the acting group guarantees that canonical lifts fix every boundary component of the cover pointwise, thereby inducing an isotopical action on the cover. Combining the planar unwrapping theorem with the boundary control result yields that if a finitely generated $\RFRS$ (or $\RFRp$) group $B$ acts isotopically on a planar surface $S$, then the semidirect product $\pi_1(S)\rtimes B$ is again $\RFRS$ (respectively $\RFRp$). Applying this proposition inductively along the Fadell--Neuwirth splitting proves Theorem~\ref{athm-rfrs} in \S \ref{section:mainthm-proof}. Finally, Corollary~\ref{cor-artin-rfrs} follows from well-known virtual embeddings of the relevant Artin groups into braid groups, together with the fact that being virtually $\mathrm{RFR}p$ passes to subgroups.

\subsection*{Acknowledgments}
Wu is currently a member of LMNS at Fudan University. We thank Ian Agol and Sam Fisher for their helpful communications. We benefited from the assistance of AI tools in our study. In particular,  the key idea  for the proof of Theorem \ref{athm-rfrs} was developed from a candidate proof sketch generated by ChatGPT-5.5 pro.

\section{Partial $\RFRS$ towers} \label{sec:par-RFRS}

In this section, we first recall a way to detect RFRSness using partial $\RFRS$ towers from \cite[\S 3]{HWY26} then generalize it to the $\RFRp$ setting. 

For a group \(G\), define
\[
\Qrad(G)=\ker\bigl(G\longrightarrow H_1(G;\bQ)\bigr).
\]
Thus \(g\in \Qrad(G)\) if and only if the image of \(g\) in \(G^{\ab}\) is torsion.

\begin{defn}
A group \(G\) is called $\RFRS$ if there is a descending sequence
\[
G=G_0\geq G_1\geq G_2\geq \cdots
\]
such that each \(G_{i+1}\) is finite-index normal in \(G_i\),
\[
\Qrad(G_i)\leq G_{i+1},
\]
and
\[
\bigcap_{i\geq 0}G_i=1.
\]
\end{defn}

\begin{defn}
A finite sequence
\[
G=G_0\geq G_1\geq \cdots \geq G_N
\]
is called a \emph{partial $\RFRS$ tower} if, for every \(0\leq i<N\), the subgroup \(G_{i+1}\) is finite-index normal in \(G_i\) and
\[
\Qrad(G_i)\leq G_{i+1}.
\]
The tower detects \(g\in G\) if
\[
g\in G_N
\qquad\text{and}\qquad
[g]\neq 0\in H_1(G_N;\bQ).
\]
\end{defn}

We have the following criterion for detecting $\RFRS$; see \cite[Corollary 3.4]{HWY26}.

\begin{lem}\label{lem:partial-criterion}
Let \(G\) be finitely generated. Then \(G\) is $\RFRS$ if and only if every nontrivial element of \(G\) is detected by a partial $\RFRS$ tower starting at \(G\).
\end{lem}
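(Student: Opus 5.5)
The forward direction is immediate from the definitions. Suppose $G$ is $\RFRS$ with witnessing chain $(G_i)$, and let $1\neq g\in G$. Since $\bigcap_i G_i=1$, there is a largest $N$ with $g\in G_N$; this $N$ exists because $g\in G_0$ and $g$ does not lie in every $G_i$. Then $g\notin G_{N+1}$. Since $\Qrad(G_N)\leq G_{N+1}$, we get $g\notin\Qrad(G_N)$, which means $[g]\neq 0$ in $H_1(G_N;\bQ)$. The truncation $G_0\geq\cdots\geq G_N$ is therefore a partial $\RFRS$ tower detecting $g$. Finite generation is not needed for this direction.

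For the converse we must assemble the individual towers into a single chain, and finite generation is used for that. The plan is as follows.

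\emph{Step 1: characteristic, finite-index refinement of $\Qrad$.} For a finitely generated group $H$, the group $H_1(H;\bZ)$ is finitely generated. Hence $H/\Qrad(H)\cong\bZ^{b}$ with $b$ finite. For each $m\geq 1$ put $R_m(H)$ equal to the preimage of $m\bZ^{b}$ in $H$. Then $R_m(H)$ is characteristic in $H$, has finite index, contains $\Qrad(H)$, and satisfies $\bigcap_m R_m(H)=\Qrad(H)$. Finite-index subgroups of $H$ are again finitely generated, so this construction can be iterated.

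\emph{Step 2: monotonicity.} The key point is that if a partial tower $G=H_0\geq\cdots\geq H_N$ detects $g$, then any partial tower $G=K_0\geq K_1\geq\cdots$ with $K_i\leq H_i$ stays compatible. Given $K_i\leq H_i$, the inclusion $K_i\to H_i$ induces a map on rational abelianizations, so $\Qrad(K_i)\leq \Qrad(H_i)\leq H_{i+1}$. The idea is to build a single descending chain where $G_{i+1}=R_{m_i}(G_i)\cap(\text{stuff})$. This requires care, however, because detection of $g$ is a statement about $g$ lying in $H_N$ with nonzero class there, and shrinking $H_N$ can destroy that.

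\emph{Step 3: a cleaner organization.} Enumerate the finitely many subgroups of each given index, or more simply enumerate $G\setminus\{1\}=\{g_1,g_2,\dots\}$, which is countable. Define $G_{i+1}=R_{m_i}(G_i)$ with the integers $m_i$ chosen diagonally, and argue that each nontrivial $g$ eventually falls out. To show $g\notin\bigcap G_i$, assume $g\in\bigcap G_i$ and use the detecting tower $(H_j)$ for $g$. We prove by induction that $G_j\leq H_j$ for $j\leq N$. This is where the choice of $m$ matters: $H_{j+1}\supseteq\Qrad(H_j)$ and has finite index in $H_j$, and $\Qrad(G_j)\leq\Qrad(H_j)$ when $G_j\leq H_j$. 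So once $m$ is divisible enough, $R_m(G_j)$ lies inside $H_{j+1}\cap G_j$; this uses that $G_j/(H_{j+1}\cap G_j)$ is a finite abelian-by-$\Qrad$ quotient, and that $\bigcap_m R_m(G_j)=\Qrad(G_j)$ together with finite index gives containment for some $m$. One then gets $g\in G_N\leq H_N$. Since $g$ stays in the whole chain we also have $g\in G_{N+1}=R_m(G_N)$ for all such $m$, so $g\in\Qrad(G_N)$.

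This still does not contradict $[g]\neq0$ in $H_1(H_N;\bQ)$ directly. The fix is to choose $G_{N+1}$ inside the preimage of a sublattice missing the image of $g$ under $G_N\to H_N\to H_1(H_N;\bQ)$: intersect $G_N$ with the preimage of $m\cdot(\text{lattice})$ in $H_N$, which is finite index in $H_N$ and contains $\Qrad(H_N)\supseteq\Qrad(G_N)$. Since countably many elements must be handled, the honest construction is a diagonal one. At stage $k$ we run the inductive containment argument for the towers of $g_1,\dots,g_k$ simultaneously, taking $G_{i+1}$ to be the intersection of the finitely many required finite-index subgroups of $G_i$ containing $\Qrad(G_i)$, made normal by passing to the normal core (a core of a subgroup containing $\Qrad(G_i)$ still contains it, since $\Qrad(G_i)$ is normal).

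The main obstacle is the bookkeeping of this diagonal intersection: making the $i$-th level of the chain sit simultaneously inside the $i$-th levels of several detecting towers of different lengths, while staying normal, of finite index, and above $\Qrad$. Finite generation, which gives finitely many subgroups of each index and finite rank abelianizations, is exactly what keeps every intersection of finite index.
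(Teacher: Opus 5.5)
Your forward direction is correct. The converse is not actually proved. The Step 3 construction does not produce a single chain with trivial intersection, and the ``main obstacle'' you name at the end is exactly the point that needs an idea. Aligning levels, i.e.\ requiring $G_i\leq H^{(g)}_i$ for the detecting towers $(H^{(g)}_i)$ of several elements $g$, all started at $G$, cannot work for infinitely many elements. The group $G_1$ is fixed once, but it would have to lie in $H^{(g_k)}_1$ for every $k$, and an infinite intersection of finite-index subgroups need not have finite index. Handling $g_1,\dots,g_k$ ``simultaneously at stage $k$'' either rebuilds the chain at every stage, so no single chain results, or hits the same problem. There is also a smaller slip: you only know $g\in R_{m_N}(G_N)$ for the one $m_N$ chosen, so $g\in\Qrad(G_N)$ does not follow. (Had it followed, your monotonicity would already give the contradiction $g\in\Qrad(H_N)$.)

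The missing idea is that detection passes to finite-index subgroups, so each element's tower can be \emph{restarted} at the current bottom of the chain. The paper cites this lemma rather than proving it, but its proof of the $\RFRp$ analogue, Lemma \ref{rfrp tower}, which it describes as parallel, uses exactly this device. If $(H_i)_{i\le N}$ detects $g$ and $L\leq G$ has finite index with $g\in L$, then $(L\cap H_i)_{i\le N}$ is a partial $\RFRS$ tower starting at $L$. Normality and finite index are clear, and your own monotonicity gives $\Qrad(L\cap H_i)\leq \Qrad(H_i)\cap L\leq H_{i+1}\cap L$. This tower still detects $g$: we have $g\in L\cap H_N$, and $H_1(L\cap H_N;\bQ)\to H_1(H_N;\bQ)$ sends $[g]$ to a nonzero class. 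So shrinking $H_N$ cannot destroy detection as long as $g$ stays in.

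Now enumerate $G\setminus\{1\}=\{g_1,g_2,\dots\}$ and treat one element at a time. If $g_j$ lies in the current terminal group $L$, append $(L\cap H^{(g_j)}_i)_i$, which ends at $M=L\cap H^{(g_j)}_N$. Then add one more step $M\geq M'$, where $M'$ is the preimage of $m\bZ^b$ under $M\to M/\Qrad(M)\cong\bZ^b$ and $m$ is chosen so that the nonzero image of $g_j$ is not in $m\bZ^b$. A concatenation of partial towers is a partial tower, and every $g_j$ is excluded at a finite stage, so the intersection is trivial.

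Finite generation enters exactly at the last step: $M$ has finite index in $G$, hence is finitely generated, so a non-torsion class can be excluded by a finite quotient containing $\Qrad(M)$. It also gives countability of $G$. It is not what keeps finite intersections of finite-index subgroups of finite index; that holds automatically.
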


For any prime $p$, Koberda and Suciu   generalized the definition of $\RFRS$ to $\RFRp$ in \cite{KoberdaSuciu2020}. A $\RFRp$ group is $\RFRS$; see \cite[Lemma 2.1]{KoberdaSuciu2020}.

\begin{defn}

Let $G=G_0$ be a finitely generated group and let $p$ be a prime.
We say that $G$ is \emph{residually finite rationally $p$} (or $\RFRp$)
if there exists a sequence of subgroups $\{G_i\}_{i\geq 0}$ of $G$ such that:

\begin{enumerate}
\item For each $i$, we have $\Qrad(G_i)\leq G_{i+1}$.

\item  $\bigcap_{i\geq 0} G_i=\{1\}$.

\item For each $i$, the quotient group $G_i/G_{i+1}$ is a finite elementary
abelian $p$-group.

\end{enumerate}
\end{defn}

Let $K_0=G$ and define 
\[
K_{i+1}=ker\{K_i \rightarrow H_1(K_i; \mathbb{Z})_{\mathrm{torsionfree}} \rightarrow H_1(K_i; \mathbb{Z})_{\mathrm{torsionfree}} \otimes \bZ/p\bZ \},
\]
where $H_1(K_i;\bZ)_{\mathrm{torsionfree}}$  is  the first homology group modulo the torsion subgroup. The $\RFRp$ radical of $G$ is defined as 

\[
\mathrm{rad}_p(G)= \cap_i K_i.
\]

The group $G$ is $\RFRp$ if and only if the radical $\mathrm{rad}_p(G)$  is trivial. Similarly, one can define a partial $\RFRp$ tower as the following.

\begin{defn}
    Let \(p\) be a prime. A finite  sequence
\[
G=G_0\geq G_1\geq \cdots \geq G_N
\]
is a \emph{partial  $\RFRp$ tower} if, for every \(0\leq i<N\), $
\operatorname{rad}_{\mathbb Q}(G_i)\leq G_{i+1}$, and the quotient \(G_i/G_{i+1}\) is a finite elementary abelian
\(p\)-group.

The tower detects \(g\in G\) if $g\in G_{N-1}$ but \(g\notin G_N\).
\end{defn}

\begin{rem} \label{rem:diff-tower}
    The detection condition for partial $\RFRp$ towers is different from the $\RFRS$ case. For $\RFRS$, a partial tower detects \(g\) if \(g\) survives to the terminal group and has a nonzero image in \(H_1(-;\mathbb Q)\). For $\RFRp$, however, detection means that \(g\) is excluded by an elementary abelian \(p\)-quotient; after truncating the tower, this means \[ g\in G_{N-1}\setminus G_N . \] For example, let us consider $G = \bZ = \langle a\rangle \geq \langle a^p\rangle$. Then $a^{p^2}$ lies in $\langle a^p\rangle$, and it also survives in $H_1(\langle a^p\rangle;\bQ)$. But it does not survive in any elementary $p$-quotient of $\langle a^p\rangle$. Thus $a^{p^2}$ is not detected by the partial $\RFRp$ tower $\langle a\rangle \geq \langle a^p\rangle$.
\end{rem}

\begin{lem}
\label{rfrp tower}
 Let \(G\) be finitely generated and let \(p\) be a prime. Then \(G\)
is  $\RFRp$ if and only if every nontrivial element of \(G\) is detected by a
partial  $\RFRp$ tower starting at \(G\).
\end{lem}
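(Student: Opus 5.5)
The plan is to compare an arbitrary partial $\RFRp$ tower with the canonical sequence $K_0\geq K_1\geq\cdots$ defining $\mathrm{rad}_p(G)$, and then use the characterization $G$ is $\RFRp$ iff $\mathrm{rad}_p(G)=1$.

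\emph{Key claim.} For any partial $\RFRp$ tower $G=G_0\geq\cdots\geq G_N$ we have $K_i\leq G_i$ for all $i\leq N$. We argue by induction on $i$, the case $i=0$ being trivial. Suppose $K_i\leq G_i$. Consider the quotient map $G_i\to G_i/G_{i+1}$. Since $\Qrad(G_i)\leq G_{i+1}$ and the quotient is elementary abelian $p$, this map factors through
\[
H_1(G_i;\bZ)_{\mathrm{torsionfree}}\otimes\bZ/p .
\]
Restricting to $K_i$ gives a homomorphism $K_i\to G_i/G_{i+1}$ into an elementary abelian $p$-group. This homomorphism kills commutators and $p$-th powers of $K_i$. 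It also kills the elements of $K_i$ whose image in $H_1(K_i;\bZ)$ is torsion: such an element $x$ satisfies $x^m\in[K_i,K_i]\leq[G_i,G_i]$ for some $m\geq 1$, so $x\in\Qrad(G_i)\leq G_{i+1}$. Hence the restricted map factors through $H_1(K_i;\bZ)_{\mathrm{torsionfree}}\otimes\bZ/p$, and so $K_{i+1}\leq G_{i+1}$.

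\emph{Converse direction.} The $K_i$ themselves form partial $\RFRp$ towers. Indeed, $K_i/K_{i+1}$ is a quotient of $H_1(K_i)_{\mathrm{tf}}\otimes\bZ/p$, and it is finite because $K_i$ is finite index in the finitely generated group $G$, hence finitely generated. Also $\Qrad(K_i)\leq K_{i+1}$ by definition.

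\emph{If $G$ is $\RFRp$.} Let $g\neq 1$. Then $\mathrm{rad}_p(G)=1$, so $g\notin K_N$ for some smallest $N\geq 1$. Therefore $g\in K_{N-1}\setminus K_N$, and the tower $K_0\geq\cdots\geq K_N$ detects $g$.

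\emph{If every nontrivial element is detected.} Let $g\neq 1$ and take a detecting tower. Then $g\notin G_N\supseteq K_N$, so $g\notin\mathrm{rad}_p(G)$. Hence $\mathrm{rad}_p(G)=1$ and $G$ is $\RFRp$.

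The step needing the most care is the induction, namely the torsion part: one has to check that torsion in $H_1(K_i)$ maps into $\Qrad(G_i)$. Once that is verified, everything else is formal. We also rely on the cited equivalence between $\RFRp$ and $\mathrm{rad}_p(G)=1$ from \cite{KoberdaSuciu2020}.
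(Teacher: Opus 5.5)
Your proof is correct, but it takes a different route from the paper's.

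\textbf{The paper's route.} The paper never uses $\mathrm{rad}_p(G)$ in this proof. For the converse direction it shows that intersecting a partial $\RFRp$ tower with a finite-index subgroup $L$ gives a partial $\RFRp$ tower for $L$. It then enumerates the countably many nontrivial elements of $G$ and concatenates such intersected towers. This builds an infinite $\RFRp$ filtration with trivial intersection, mirroring the $\RFRS$ criterion of Lemma~\ref{lem:partial-criterion}.

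\textbf{Your route.} You show that the canonical sequence $K_i$ is the fastest-descending partial $\RFRp$ tower: $K_i\leq G_i$ for every partial tower. Hence $g$ is detected by some partial tower if and only if $g\notin\mathrm{rad}_p(G)$. Your handling of the torsion step is exactly what makes this work: from $x^m\in[K_i,K_i]\leq[G_i,G_i]$ you correctly get $x\in\Qrad(G_i)\leq G_{i+1}$.

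\textbf{What each buys.} Your argument is shorter and needs no enumeration. It relies on a canonical minimal step, which exists only in the mod-$p$ setting, because $H_1(K_i;\bZ)_{\mathrm{torsionfree}}\otimes\bZ/p$ is finite. In the $\RFRS$ setting the finite quotients of $H_1(-;\bZ)_{\mathrm{torsionfree}}$ must be chosen non-canonically. That is why the paper uses the intersection-and-enumeration argument, which transfers directly from the $\RFRS$ case.

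\textbf{A small improvement.} You do not need to cite the equivalence between $\RFRp$ and $\mathrm{rad}_p(G)=1$. Your key claim applies verbatim to an infinite $\RFRp$ filtration $\{G_i\}$, giving $\mathrm{rad}_p(G)\leq\bigcap_i G_i=1$. Conversely, you have already checked that the $K_i$ themselves form such a filtration when $\mathrm{rad}_p(G)=1$. So your proof can be made fully self-contained.
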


\begin{proof} 
The proof is similar to that of Lemma \ref{lem:partial-criterion}. If \(G\) is $\RFRp$, then every nontrivial element is excluded by some
finite initial segment of an  $\RFRp$ filtration.

Conversely, assume that every nontrivial element of \(G\) is detected by some
partial  $\RFRp$ tower. This property passes to finite-index subgroups. Indeed,
let \(L\leq G\) have finite index and let \(1\neq \ell\in L\). Choose a partial
 $\RFRp$ tower
\[
G=G_0\geq G_1\geq \cdots \geq G_N
\]
with \(\ell\notin G_N\), and put \(L_i=L\cap G_i\). Then
\[
L=L_0\geq L_1\geq \cdots \geq L_N
\]
is a partial  $\RFRp$ tower: normality is immediate, \(L_i/L_{i+1}\) injects
into \(G_i/G_{i+1}\), and if \(x\in \operatorname{rad}_{\mathbb Q}(L_i)\), then
the image of \(x\) in \(H_1(G_i;\mathbb Q)\) is zero, so
\(x\in \operatorname{rad}_{\mathbb Q}(G_i)\leq G_{i+1}\), hence
\(x\in L_{i+1}\). Also \(\ell\notin L_N\).

Now enumerate the nontrivial elements of \(G\) as
\[
g_1,g_2,g_3,\dots .
\]
We build an infinite tower by induction. Suppose a finite partial tower has already been constructed and has a terminal subgroup \(L\). If \(L=\{1\}\), keep the tower constant at \(1\) thereafter. If \(g_j\notin L\), do nothing at the \(j\)-th stage. If \(g_j\in L\), apply the condition to find a partial tower 
$G=H_0 \rhd H_1 \rhd H_2 \rhd \cdots \rhd H_m $
with \(g_j \in H_{m-1} \setminus H_m \). By the previous argument, we have a finite partial tower from \(G\) to a subgroup \(M=L\cap H_m\). Repeat this argument for any $g_i$ to find an infinite tower.  The resulting tower
satisfies the  $\RFRp$ conditions, and every nontrivial element of \(G\) is
eventually excluded in the sequence. Hence, the intersection is trivial. Thus \(G\) is  $\RFRp$.
\end{proof}

\begin{cor}\label{cor:rfrp-tower}
    Let \(G\) be a finitely generated group and let \(p\) be a prime. Then \(G\)
is  $\RFRp$ if and only if every nontrivial element of \(G\) is detected by a
partial  $\RFRp$ tower  $G=G_0\geq G_1\geq \cdots G_n$ such that $[G_i:G_{i+1}] =p$ for $0\leq i\leq n-1$.
\end{cor}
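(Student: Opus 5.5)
One direction is immediate from Lemma \ref{rfrp tower}: a tower with all indices equal to $p$ is in particular a partial $\RFRp$ tower. So if every nontrivial element is detected by such a tower, $G$ is $\RFRp$. For the converse, I would take an arbitrary partial $\RFRp$ tower detecting $g$, given by Lemma \ref{rfrp tower}, and refine it into one with all indices $p$. Then I would truncate the refinement at the first step that excludes $g$.

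\textbf{Refinement.} Let $G=G_0\geq G_1\geq\cdots\geq G_N$ be a partial $\RFRp$ tower with $g\in G_{N-1}\setminus G_N$. Each quotient $V_i=G_i/G_{i+1}$ is an elementary abelian $p$-group, i.e. an $\mathbb F_p$-vector space of some dimension $d_i$. Choose a flag of subspaces $V_i=W_0>W_1>\cdots>W_{d_i}=0$ with each $W_j$ of codimension one in $W_{j-1}$, and let $G_i=H_{i,0}\geq H_{i,1}\geq\cdots\geq H_{i,d_i}=G_{i+1}$ be the preimages in $G_i$.

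\textbf{Checking the axioms.} Each $H_{i,j}$ contains $G_{i+1}$, so it is normal in $G_i$, since $V_i$ is abelian; in particular $H_{i,j+1}\trianglelefteq H_{i,j}$. The quotient $H_{i,j}/H_{i,j+1}\cong W_j/W_{j+1}\cong\bZ/p$. The key condition is $\Qrad(H_{i,j})\leq H_{i,j+1}$.
\begin{itemize}
\item For $j=0$: $\Qrad(G_i)\leq G_{i+1}\leq H_{i,1}$.
\item For $j\geq1$: the map $H_{i,j}\to H_{i,j}/H_{i,j+1}\cong\bZ/p$ factors through $H_{i,j}^{\ab}$, but it may not factor through the torsion-free quotient. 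This is the main obstacle.
\end{itemize}
To handle it, I would show $\Qrad(H_{i,j})\leq G_{i+1}$ directly. Take $x\in H_{i,j}$ with $x^m\in[H_{i,j},H_{i,j}]$ for some $m\geq1$. Since $H_{i,j}\leq G_i$, we get $x^m\in[G_i,G_i]$, so the image of $x$ in $H_1(G_i;\bZ)$ is torsion. Hence $x\in\Qrad(G_i)\leq G_{i+1}\leq H_{i,j+1}$. So the condition holds because $\Qrad$ is monotone in the following sense: for $H\leq K$, $\Qrad(H)\leq\Qrad(K)$, as torsion in $H^{\ab}$ maps to torsion in $K^{\ab}$.

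\textbf{Truncation.} Concatenate the refinements to get a tower with all indices $p$ ending at $G_N$. Along this finer chain, $g$ lies in $G_{N-1}$ but not in $G_N$, so there is a first term of the chain not containing $g$. Truncate the tower at that term. Then $g$ lies in the previous term but not in the last one, which is exactly detection in the sense of the definition. The resulting tower still starts at $G$ and satisfies all the required conditions, which completes the converse.
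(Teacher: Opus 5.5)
Your proposal is correct and follows essentially the same route as the paper. Both arguments refine each elementary abelian step into index-$p$ steps: you use preimages of a flag, while the paper uses preimages of coordinate subgroups of $(\bZ/p)^{m_i}$. Both verify $\Qrad(H_{i,j})\leq H_{i,j+1}$ by observing $x^k\in[H_{i,j},H_{i,j}]\leq[G_i,G_i]$, so $x\in\Qrad(G_i)\leq G_{i+1}$, and both then truncate at the first term not containing $g$.
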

\proof
By Lemma \ref{rfrp tower}, it suffices to prove the ``only if'' direction. For any $g\neq 1$, again by Lemma \ref{rfrp tower}, we have a partial $\RFRp$ tower $G=G_0\geq G_1\geq \cdots G_n$ that detects $g$. Since each $G_{i}/G_{i+1}$ is an elementary abelian $p$-group, it is isomorphic to $(\bZ/p)^{m_i}$ for some $m_i\geq 1$. Let $q_i: G_i \to G_i/G_{i+1}$ be the quotient map, and $G_{i,j}$ be the preimage of the first $(m_i-j)$ copies of $\bZ/p$ in  $(\bZ/p)^{m_i}$ for any $1\leq j\leq m_i$. Now replace $G_i\geq G_{i+1}$ in the partial $\RFRp$ tower by $G_i \geq G_{i,1}\geq \cdots G_{i,m_i} =G_{i+1}$.  Note that $\operatorname{rad}_{\mathbb Q}(G_{i,j}) \leq G_{i,{j+1}}$. In fact, if $x\in \operatorname{rad}_{\mathbb Q}(G_{i,j})$, then for some $k\neq 0$, $x^k$ lies in the commutator subgroup of $G_{i,j}$. Since $G_{i,j}\leq G_i$, $x^k$ also lies in the commutator subgroup of $G_{i}$, hence $x\in \operatorname{rad}_{\mathbb Q}(G_{i})$  so it lies in $G_{i+1}\leq G_{i,j+1}$. Now truncate the refined tower at the first subgroup no containing $g$.
\qed

The following is a topological interpretation of Corollary \ref{cor:rfrp-tower}.
\begin{lem}
\label{rfrp topology}
 Let $X$ be a topological space with an  $\RFRp$ fundamental group for some prime $p$. For any nontrivial element $g\in \pi_1(X)$, there is a tower of $\mathbb{Z}/p$-covers
 \[
        S_r\longrightarrow S_{r-1}\longrightarrow \cdots\longrightarrow S_0=X
\]
such that $g$ lifts throughout the whole tower, and its lift in the final cover $S_r$ is  non-trivial in $H_1(S_r;\bQ)$.
\end{lem}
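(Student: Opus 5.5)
Let $G=\pi_1(X)$ and $1\neq g\in G$. The plan is to translate Corollary \ref{cor:rfrp-tower} into covering-space language. Implicitly we need $X$ to have a theory of covering spaces (path-connected, locally path-connected, semilocally simply connected), and $G$ finitely generated, as in the definition of $\RFRp$. We also want the final homology class to be rationally nonzero. So the plan is to first produce a partial tower detecting $g$ in the $\RFRp$ sense, $g\in G_{N-1}\setminus G_N$, and then stop at level $N-1$ rather than $N$.

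\emph{Step 1: the tower of subgroups.} Corollary \ref{cor:rfrp-tower} gives a partial $\RFRp$ tower
\[
G=G_0\geq G_1\geq\cdots\geq G_N
\]
with every index $[G_i:G_{i+1}]=p$ and $g\in G_{N-1}\setminus G_N$. Each step has index $p$ and quotient an elementary abelian $p$-group, so each $G_{i+1}$ is normal in $G_i$ with $G_i/G_{i+1}\cong\bZ/p$. Set $r=N-1$.

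\emph{Step 2: the tower of covers.} Let $S_i\to X$ be the connected cover corresponding to $G_i$, based at a chosen lift of the basepoint. The inclusions $G_{i+1}\leq G_i$ give covering maps $S_{i+1}\to S_i$. Each of these is regular with deck group $G_i/G_{i+1}\cong\bZ/p$, so it is a $\bZ/p$-cover. Since $g\in G_{r}\subseteq G_i$ for every $i\leq r$, the loop representing $g$ lifts to a closed loop at the basepoint in each $S_i$ with $i\leq r$. Its lift to $S_r$ represents $g\in G_r=\pi_1(S_r)$.

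\emph{Step 3: rational nontriviality.} This is the main point, and also the place where stopping at $S_r$ rather than $S_N$ matters. Suppose $[g]=0$ in $H_1(S_r;\bQ)=H_1(G_r;\bQ)$. Then $g\in\Qrad(G_r)$. By the tower condition $\Qrad(G_r)\leq G_{r+1}=G_N$, so $g\in G_N$, which contradicts the choice of the tower. Hence the lift of $g$ is nontrivial in $H_1(S_r;\bQ)$.

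The only subtlety is this choice of truncation level. Corollary \ref{cor:rfrp-tower} detects $g$ by exclusion at level $N$, whereas the lemma asks for survival at some level with nonzero rational class. Remark \ref{rem:diff-tower} shows that these two notions differ in general. The inclusion $\Qrad(G_{N-1})\leq G_N$ is what converts exclusion at level $N$ into a nonzero rational class at level $N-1$. Everything else is the standard Galois correspondence for covers.
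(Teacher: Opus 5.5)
Your proposal is correct and follows the paper's proof: take a partial $\RFRp$ tower with $g\in G_{r}\setminus G_{r+1}$, pass to the covers for $G_0\geq\cdots\geq G_r$, and use $\Qrad(G_r)\leq G_{r+1}$ to conclude that $[g]\neq 0$ in $H_1(S_r;\bQ)$. Your explicit appeal to Corollary~\ref{cor:rfrp-tower}, which makes every step have index $p$ so that the covers are genuinely $\bZ/p$-covers, fills in a point the paper's proof leaves implicit.
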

\proof
Since $\pi_1(X)$ is $\RFRp$, we can find a partial $\RFRp$ tower $\pi_1(X) = G_0\geq G_1\geq \cdots \geq G_r\geq G_{r+1}$, such that $g\in G_{r}$ but $g\not\in G_{r+1}$. Since $\operatorname{rad}_{\mathbb Q}(G_{r}) \leq G_{r+1}$, we have $[g]\neq 0\in  H_1(S_r;\bQ)$. Now take the tower of covers corresponding to partial $\RFRp$ tower $\pi_1(X) = G_0\geq G_1\geq \cdots \geq G_r$, we are done.

\qed

\begin{cor}
\label{curves}
   Let $S$ be a compact surface that is not the real projective plane, the Klein
bottle, nor the non-orientable surface of Euler characteristic $-1$.  For any nontrivial element $g\in \pi_1(S)$, there is a tower of $\mathbb{Z}/p$-covers
 \[
        S_r\longrightarrow S_{r-1}\longrightarrow \cdots\longrightarrow S_0=S
\]
such that $g$ lifts to closed curves throughout the whole tower, and its lift in the final cover $S_r$ is nontrivial in $H_1(S_r;\bQ)$.
\end{cor}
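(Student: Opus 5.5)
The plan is to reduce Corollary \ref{curves} to Lemma \ref{rfrp topology}. For that I need that $\pi_1(S)$ is $\RFRp$, and the introduction already indicates the route: surface groups embed into RAAGs, and RAAGs are $\RFRp$.

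\textbf{Step 1: embedding into a RAAG.} First I dispose of the degenerate cases by the Euler characteristic of $S$.
\begin{itemize}
\item If $\pi_1(S)$ is trivial (disk or sphere), there is nothing to prove.
\item If $\pi_1(S)\cong\bZ$ (annulus or M\"obius band), or $\pi_1(S)\cong\bZ^2$ (torus), then $\pi_1(S)$ is itself a RAAG.
\item If $S$ has nonempty boundary, $\pi_1(S)$ is free of finite rank, hence a RAAG.
\end{itemize}
The excluded surfaces are exactly the projective plane (finite group), the Klein bottle, and the non-orientable surface with $\chi=-1$. The last two fail because the relevant relations force non-embeddability into RAAGs; for instance, the Klein bottle group is not RFRS-friendly in this sense.

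In every remaining closed case, I invoke the cited result that $\pi_1(S)$ embeds into a RAAG. For closed orientable surfaces of genus at least $2$, this is the standard embedding of Crisp--Wiest / Servatius--Droms--Servatius. For closed non-orientable surfaces with $\chi\leq -2$, it is also due to Crisp--Wiest. This is exactly what the introduction quotes.

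\textbf{Step 2: passing $\RFRp$ to subgroups.} By \cite[Theorem 3]{MR2077673} and \cite[Proposition 1.1]{KoberdaSuciu2020}, every finitely generated RAAG is $\RFRp$. Next I check that $\RFRp$ passes to finitely generated subgroups. Given an $\RFRp$ filtration $\{A_i\}$ of the ambient group $A$ and a subgroup $G\le A$, set $G_i=G\cap A_i$. Then:
\begin{itemize}
\item $G_i/G_{i+1}$ injects into $A_i/A_{i+1}$, so it is elementary abelian $p$.
\item If $x\in\Qrad(G_i)$, some power $x^k$ lies in $[G_i,G_i]\le[A_i,A_i]$, so $x\in\Qrad(A_i)\le A_{i+1}$, and hence $x\in G_{i+1}$. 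This is the same argument as in the proof of Lemma \ref{rfrp tower}.
\item The intersection of the $G_i$ is trivial, since the intersection of the $A_i$ is.
\end{itemize}
Since $\pi_1(S)$ is finitely generated, it follows that $\pi_1(S)$ is $\RFRp$.

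\textbf{Step 3: building the tower.} Apply Corollary \ref{cor:rfrp-tower} to get a partial $\RFRp$ tower with all indices equal to $p$ that detects $g$. Then take the argument of Lemma \ref{rfrp topology}:
\begin{itemize}
\item The covers corresponding to $G_0\ge\cdots\ge G_r$ form a tower of regular $\bZ/p$-covers, each with deck group $G_i/G_{i+1}\cong\bZ/p$.
\item Since $g\in G_r$, the curve $g$ lifts to a closed curve at every stage.
\item Since $g\notin G_{r+1}\supseteq\Qrad(G_r)$, the lift is nonzero in $H_1(S_r;\bQ)$.
\end{itemize}

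The only real content is Step 1, the embedding of the surface group into a RAAG. I treat it as a black box from the literature; everything else is bookkeeping already carried out in the preceding lemmas.
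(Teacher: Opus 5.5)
Your proposal is correct and follows the paper's route: embed $\pi_1(S)$ in a RAAG, use that RAAGs are $\RFRp$ and that $\RFRp$ passes to subgroups, then apply Lemma~\ref{rfrp topology}; your explicit use of Corollary~\ref{cor:rfrp-tower} to make each step a genuine $\bZ/p$-cover, and your direct check of subgroup closure, are more careful than the paper's one-line proof. Your parenthetical about the Klein bottle is inaccurate but plays no role in the argument: that group is in fact $\RFRS$ (even $\mathrm{RFR}2$), and what rules out a RAAG embedding is that it is not bi-orderable and is not $\RFRp$ for odd $p$.
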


\begin{proof}
    By the previous lemma, it is enough to note that the fundamental group $\pi_1(S)$  embeds as a subgroup into a right-angled Artin group (RAAG) and a RAAG has $\RFRp$ (see \cite[Theorem 3]{MR2077673}, \cite[Proposition 1.1]{KoberdaSuciu2020}), implying that $\pi_1(S)$ has $\RFRp$.
\end{proof}

\section{Planar arc-cyclic covers}\label{sec:cover}

Let us first recall the definition of mapping class group.

\begin{defn}
    Let $S$ be a finite type surface. The \emph{mapping class group} of $S$ is $\Map(S) = \pi_0(\Homeo(S, \partial S))$, the group of isotopy classes of homeomorphisms of $S$ that restrict to the identity on $\partial S$. The pure mapping class group $\PMap(S)$ is the subgroup of $\Map(S)$ that fixes all the punctures pointwise.
\end{defn}

From now on, $S$ will denote a compact connected oriented planar surface with nonempty boundary.  Thus $S$ is a sphere with finitely many open disks  removed.  We assume $\pi_1(S)$ is nontrivial unless explicitly stated; the disk case is vacuous.

A closed curve $\alpha$ in $S$ will always mean a smooth immersed map
\[
        \alpha:S^1\to \operatorname{int}(S)
\]
in general position: all self-intersections are transverse double points, and there are no triple points.  Its number of self-intersections is the number of unordered pairs $\{x,y\}\subset S^1$, $x\ne y$, with $\alpha(x)=\alpha(y)$.  A representative of a free homotopy class is called \emph{minimal} if this number is minimal among all immersed representatives of that free homotopy class. We shall call a closed curve \emph{essential} if it is not null-homotopic. 

Let $q$ be a self-intersection point of $\alpha$.  The two preimages of $q$ divide $S^1$ into two arcs, and their images give two closed curves based at $q$.  We call either of these curves a \emph{self-intersection subloop} of $\alpha$ at $q$.  A self-intersection subloop is called \emph{simple} if its image is an embedded circle. The following two lemmas are elementary. 

\begin{lem}[Innermost essential subloop]\label{lem:essential-subloop}
Let $\alpha$ be a minimal immersed representative of a nontrivial free homotopy class in a compact planar surface $S$.  If $\alpha$ has a self-intersection, then $\alpha$ has a simple self-intersection subloop $\beta$ that is essential in $S$.
\end{lem}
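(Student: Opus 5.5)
Since $\alpha$ is in general position, it has finitely many double points. Each double point $q$ yields two self-intersection subloops, and each subloop is itself an immersed closed curve in $\operatorname{int}(S)$. I will pick a subloop that is minimal with respect to inclusion of parameter arcs. Concretely, among all pairs $x\ne y$ in $S^1$ with $\alpha(x)=\alpha(y)$, choose one for which one of the two arcs $[x,y]\subset S^1$ contains no further pair of identified points in its interior. This is possible because the set of such pairs is finite and nested arcs have strictly smaller length. The restriction $\beta=\alpha|_{[x,y]}$ is then injective on the open arc, so its image is an embedded circle with at most one corner at $q=\alpha(x)$. Thus $\beta$ is a simple self-intersection subloop.

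The real issue is essentiality. My first step is to argue that every simple subloop $\beta$ that is null-homotopic leads to a contradiction with minimality. Since $S$ is planar and $\beta$ is an embedded circle (smoothable at its corner), a null-homotopic simple closed curve in $S$ bounds an embedded disk $D\subset S$. This follows from the Jordan--Schoenflies theorem in $S^2\supset S$: the curve separates $S^2$ into two disks. If neither disk lay in $S$, each would contain a boundary component of $S$, and then $\beta$ would be homotopically nontrivial in $\pi_1(S)$, since it would be a product of boundary loops in the free group, which is nontrivial. So the curve bounds a disk in $S$ whose only corner is at $q$, i.e.\ a monogon. Other strands of $\alpha$ may cross $D$. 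I will then appeal to the standard fact (Hass--Scott) that a curve with an immersed or embedded monogon is not minimal. Rather than quoting this, I would give an argument: take an innermost monogon or bigon inside $D$ formed by $\alpha$ itself. Such a region exists because the arcs of $\alpha$ crossing $D$ cut it into pieces, and an innermost piece adjacent to the corner is a monogon or bigon. Pushing across it by a homotopy strictly reduces the double point count, contradicting minimality. This step is the main obstacle. Arcs of $\alpha$ entering $D$ must exit through $\partial D=\beta$, and I need to check carefully that an innermost region is an embedded monogon or bigon whose removal reduces the count by at least one. I expect to handle this with the Hass--Scott reduction lemma if that level of citation is acceptable.

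Assuming every simple subloop is essential, the statement follows from the first paragraph. If one prefers an "innermost essential" phrasing, there is an alternative. Take a subloop whose parameter arc is minimal among those that are essential. This requires at least one essential subloop, which holds because the two subloops at any $q$ multiply (up to conjugation) to the nontrivial class of $\alpha$, so not both are trivial. Then show by the monogon argument above that any subloop nested inside it is inessential, hence absent, so the chosen subloop is simple. Either route reduces to the same key fact: a minimal curve in a planar surface has no self-intersection subloop bounding a disk. I would prove that first as a standalone claim.
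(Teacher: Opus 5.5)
\paragraph{Verdict.} Your route is the paper's route, and the step you flag as the main obstacle really is a gap as written. The paper's route is: pick a simple self-intersection subloop by an innermost choice, note that a null-homotopic simple subloop bounds a disk $D$, and get a contradiction with minimality from an innermost monogon or bigon inside $D$. Your first paragraph and the Schoenflies step are fine.

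\paragraph{The false step.} You claim that the arcs of $\alpha$ crossing $D$ cut it into pieces, one of which (say the one adjacent to $q$) is a monogon or bigon. This is false. Take three arcs of $\alpha$ that enter and leave $D$ through $\beta$ and pairwise cross once in general position, like three slightly perturbed diameters. They cut $D$ into a central triangle, three further triangles and three quadrilaterals. The corner $q$ only adds a corner to one of these faces, so no face has fewer than three corners.

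If you instead take a monogon or bigon that is minimal under inclusion, it exists, but it may still be crossed by other arcs of $\alpha$. Pushing across it is a reduction only after you analyse those crossing arcs; for a minimal bigon you must show each one runs from one side to the other. That analysis is the nontrivial part of the Hass--Scott reduction. The paper's proof compresses this same step into one sentence.

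\paragraph{A direct fix.} You can bypass disks, planarity and Hass--Scott entirely.
\begin{itemize}
\item Let $q=\alpha(x)=\alpha(y)$, let $\beta=\alpha|_{[x,y]}$, and let $\gamma=\alpha|_{[y,x]}$ be the complementary subloop.
\item Reading $\alpha$ from $x$, we have $[\alpha]=[\beta][\gamma]$ in $\pi_1(S,q)$. So if $\beta$ is null-homotopic, then $\gamma$ is freely homotopic to $\alpha$.
\item Choose a small disk $U$ about $q$ that meets $\alpha$ only in its two transverse branches through $q$. The loop $\gamma$ uses just one half of each branch, and these halves meet at a corner. Round this corner inside $U$.
\item The result is an immersed curve in general position, freely homotopic to $\alpha$. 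Its self-intersection pairs are exactly the pairs $\{s,t\}$ in the open arc from $y$ to $x$ with $\alpha(s)=\alpha(t)$.
\item These pairs form a subset of the self-intersection pairs of $\alpha$ that omits $\{x,y\}$. This contradicts minimality.
\end{itemize}
Hence every self-intersection subloop of a minimal $\alpha$ is essential. Combined with your first paragraph, this proves the lemma.
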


\begin{proof}
Among all self-intersection subloops of $\alpha$, choose one whose number of self-intersections is minimal.  If this number were positive, then a self-intersection of that subloop would cut off a smaller self-intersection subloop, contradicting the choice.  Hence the chosen subloop $\beta$ is simple.

It remains to show that $\beta$ is essential.  Suppose, for contradiction, that $\beta$ is null-homotopic.  Since $S$ is a surface and $\beta$ is simple, $\beta$ bounds an embedded disk $D\subset S$.  Choose such a null-homotopic simple self-intersection subloop with the number of intersections of $\alpha$ inside $D$ minimal.  If the interior of $D$ met $\alpha$, then an innermost component of the intersection pattern inside $D$ would produce a smaller null-homotopic simple self-intersection subloop, contradicting the minimality of $D$. Note that here, the innermost component of the intersection pattern inside $D$ would not be a bigon since $\alpha$ is in minimal position. Therefore the interior of $D$ is disjoint from $\alpha$.

Thus $\beta$ is a monogon bounded by a subarc of $\alpha$.  Sliding this subarc across $D$ removes the corresponding self-intersection and creates no new self-intersections.  This contradicts the minimality of $\alpha$.  Hence $\beta$ is essential.
\end{proof}

\begin{lem}[Essential simple curves in planar surfaces]\label{lem:essential-separates}
Let $\beta\subset S$ be an essential simple closed curve.  Then $\beta$ separates the boundary components of $S$ into two nonempty proper collections.  In particular, there is a properly embedded arc $a\subset S$ joining two boundary components and meeting $\beta$ transversely in exactly one point.
\end{lem}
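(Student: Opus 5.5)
We will use the Jordan–Schoenflies theorem on the sphere. Write $S=\Sigma\setminus(D_1\sqcup\cdots\sqcup D_m)$, where $\Sigma\cong S^2$ and the $D_j$ are disjoint open disks whose closures are pairwise disjoint. Since $\beta\subset\operatorname{int}(S)$ is a simple closed curve in $\Sigma$, it separates $\Sigma$ into two open disks $U$ and $V$. Each boundary circle $\partial D_j$ is connected and disjoint from $\beta$, so $D_j\cup\partial D_j$ lies entirely in $U$ or entirely in $V$. This partitions the boundary components of $S$ into two collections $\mathcal{B}_U$ and $\mathcal{B}_V$.

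The first step is to show both collections are nonempty. Suppose, say, $\mathcal{B}_U=\emptyset$. Then $\overline U\subset S$, and $\overline U$ is a closed disk by Schoenflies. So $\beta$ bounds a disk in $S$ and is null-homotopic, contradicting essentiality. By symmetry $\mathcal{B}_V\neq\emptyset$ as well. Each collection is then also a proper subcollection, since the other is nonempty. Note also that $S\setminus\beta$ has exactly two components, $S_U=U\cap S$ and $S_V=V\cap S$, each connected: $U$ minus finitely many disjoint closed disks in its interior is connected.

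The second step constructs the arc. Pick boundary components $C\in\mathcal{B}_U$ and $C'\in\mathcal{B}_V$. Take a short arc $\tau$ crossing $\beta$ transversely at one point $x$, with endpoints $u\in S_U$ and $v\in S_V$; this is possible because $\beta$ is smooth and has a collar. Since $S_U$ is a connected surface containing $C$ in its boundary, choose an embedded arc $\gamma_U$ in $S_U$ from $u$ to a point of $C$ whose interior lies in $\operatorname{int}(S_U)$ and which meets $\tau$ only at $u$. Choose $\gamma_V$ in $S_V$ from $v$ to $C'$ similarly. Embeddedness can be arranged because connected open surfaces are path connected by embedded arcs, and we may perturb away from $\tau$ using the collar of $\beta$. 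The concatenation $a=\gamma_U\cup\tau\cup\gamma_V$ is then a properly embedded arc joining $C$ and $C'$. It meets $\beta$ only at $x$, transversely, because $\gamma_U$ and $\gamma_V$ lie in the complement of $\beta$.

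The only delicate point is the appeal to Schoenflies in the first step: a simple closed curve in $S$ that bounds no boundary component bounds a disk in $S$. This is where planarity is essential. In positive genus the statement fails, since a nonseparating curve gives an empty side. The rest is routine general-position bookkeeping.
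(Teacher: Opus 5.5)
Your proof is correct and follows the paper's argument: planarity makes $\beta$ separate $S$, an empty side would be a disk and force $\beta$ to be null-homotopic, and the arc is obtained by joining a boundary component on each side to a point of $\beta$. The difference is only in detail: you make the Jordan--Schoenflies step explicit in the ambient sphere, and you secure transversality at $\beta$ with the short crossing arc $\tau$, both of which the paper leaves implicit.
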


\begin{proof}
Since $S$ is planar, every simple closed curve separates $S$.  If one complementary component of $S\setminus \beta$ contained no boundary component of $S$, then that component would be a disk, and $\beta$ would be null-homotopic.  Since $\beta$ is essential, both complementary components contain boundary components.  Choose one boundary component on each side and join them to a point in $\beta$ by two embedded arcs in each component. The concatenation of the two arcs  gives the required arc.
\end{proof}

\subsection{Cyclic covers dual to arcs}\label{sec:cyc-dul-cov}

Let $a\subset S$ be a properly embedded oriented arc whose endpoints lie on two distinct boundary components.  The algebraic intersection number (for details, see \cite[p. 28 and 165]{FaMa11}) with $a$ defines a homomorphism
\[
        \lambda_a:H_1(S;\mathbb{Z})\longrightarrow \mathbb{Z},
\]
and hence also a homomorphism, denoted by the same symbol,
\[
        \lambda_a:\pi_1(S)\longrightarrow \mathbb{Z}.
\]
For an integer $m\ge 2$, let
\[
        S_{a,m}\longrightarrow S
\]
be the connected cyclic $m$-fold cover corresponding to
\[
        \ker\bigl(\pi_1(S)\xrightarrow{\lambda_a}\mathbb{Z}\to \mathbb{Z}/m\mathbb{Z}\bigr).
\]
We call this an \emph{arc-cyclic $m$-cover}; see Figure \ref{fig:sample} for a picture.

\begin{figure}[ht]
    \centering
\includegraphics[width=0.4\textwidth]{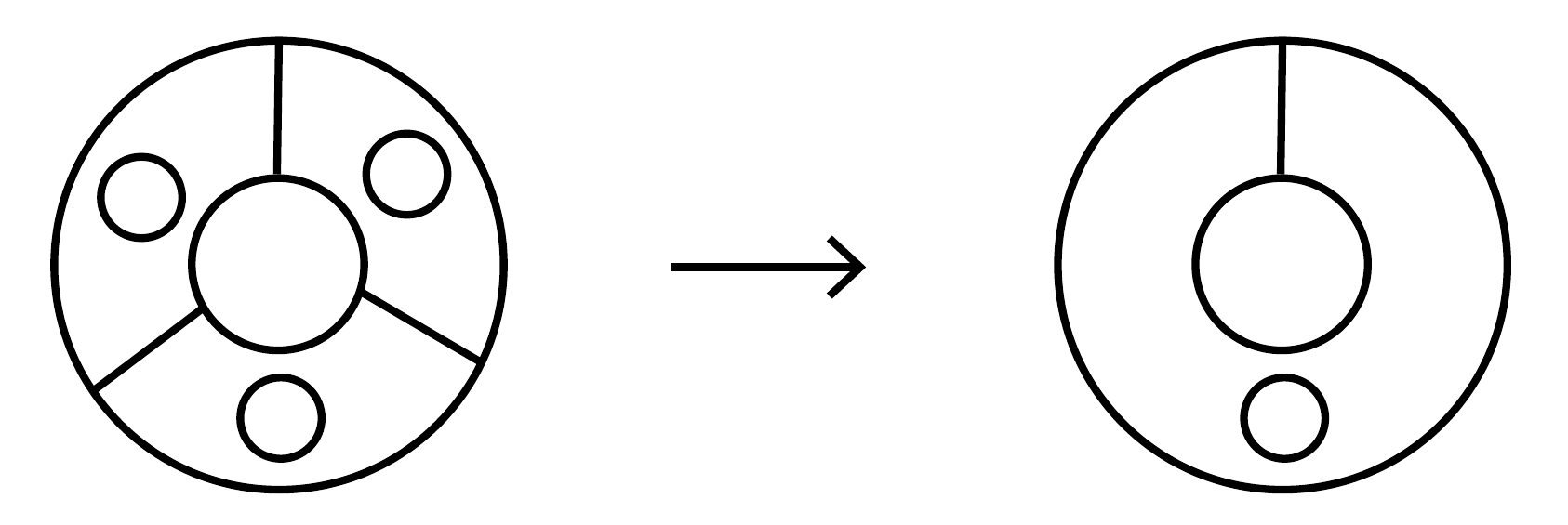}
    \caption{Arc-cyclic $3$-cover}
    \label{fig:sample}
\end{figure}

\begin{lem}[Arc-cyclic covers are planar]\label{lem:cyclic-planar}
Every arc-cyclic cover $S_{a,m}\to S$ is planar.
\end{lem}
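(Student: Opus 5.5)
The plan is to build $S_{a,m}$ explicitly by cutting and gluing, and then read off its genus from an Euler characteristic count. We need to show that $S_{a,m}$ is compact, connected, orientable, of genus $0$, and has nonempty boundary.

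\emph{Cut-and-paste model.} Let $a$ join distinct boundary components $C_1 \neq C_2$. Cutting $S$ along $a$ gives a surface $S'$ with two copies $a^+, a^-$ of $a$ in its boundary.
\begin{itemize}
\item Since $a$ joins two different boundary circles, $C_1$, $a^+$, $C_2$ and $a^-$ merge into a single boundary circle of $S'$. Hence $S'$ is connected, and it is planar with one fewer boundary component than $S$.
\item Because $\lambda_a$ counts signed crossings of $a$, the cover $S_{a,m}$ is obtained by taking copies $S'_0,\dots,S'_{m-1}$ and gluing $a^+$ in $S'_j$ to $a^-$ in $S'_{j+1}$, with indices mod $m$.
\item Connectedness holds because $\lambda_a$ is onto $\mathbb Z$: the composite with $\mathbb Z \to \mathbb Z/m$ is onto, since some loop crosses $a$ exactly once (Lemma~\ref{lem:essential-separates} style). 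Compactness and orientability are inherited.
\end{itemize}

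\emph{Euler characteristic.} This is multiplicative under finite covers, so
\[
\chi(S_{a,m}) = m\,\chi(S) = m(2-b),
\]
where $b$ is the number of boundary components of $S$.

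\emph{Counting boundary components.} A boundary circle $C$ of $S$ other than $C_1, C_2$ does not meet $a$. Its inclusion therefore has $\lambda_a = 0$, and it lifts to $m$ disjoint circles. For $C_1$: it meets $a$ in exactly one endpoint, but a loop running once around $C_1$ near the boundary (pushed into the interior) crosses $a$ once. So $\lambda_a(C_1) = \pm 1$, which generates $\mathbb Z/m$. The preimage of $C_1$ is thus a single circle covering $C_1$ with degree $m$, and the same holds for $C_2$. The total number of boundary components of $S_{a,m}$ is
\[
b' = m(b-2) + 2.
\]

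\emph{Genus.} Using $\chi = 2 - 2g - b'$,
\[
2 - 2g = m(2-b) + m(b-2) + 2 = 2,
\]
so $g = 0$. Moreover $b' \ge 2 > 0$, so $S_{a,m}$ is planar.

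\emph{Main subtlety.} The one step needing care is the claim that $\lambda_a([C_i]) = \pm 1$, i.e.\ the sign and transversality convention for loops parallel to a boundary circle that $a$ ends on. I would argue it directly from the cut-and-paste model. A collar loop around $C_1$ crosses $a$ exactly once, so passing once around $C_1$ moves from sheet $j$ to sheet $j\pm1$. The $m$ boundary arcs over $C_1$ then chain into one circle.

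An alternative route avoids this step entirely: cut $S_{a,m}$ along the $m$ lifts of $a$ and note that the pieces are planar. A surface obtained from planar pieces glued cyclically along arcs has genus $0$ because every simple closed curve separates. But the Euler characteristic count above is the cleaner argument and is the one I would use.
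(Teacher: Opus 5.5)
Your proof is correct and is essentially the paper's: compute $\chi(S_{a,m})=m(2-b)$, note that $C_1,C_2$ each have a single preimage because $\lambda_a([C_i])=\pm1$, while the other $b-2$ boundary circles each have $m$ preimages, and solve for $g=0$. Your cut-and-paste model also supplies connectedness and nonempty boundary, which the paper leaves implicit. Do not lean on the ``alternative route'' at the end, though: gluing planar pieces along arcs can produce genus (a one-holed torus cut along two suitable arcs is a disk), and ``every simple closed curve separates'' is the conclusion rather than a reason. Your main argument does not need it.
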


\begin{proof}
Let $b$ be the number of boundary components of $S$.  Let $C_1$ and $C_2$ be the two boundary components containing the endpoints of $a$.  With suitable boundary orientations, we can assume that
\[
        \lambda_a([C_1])=1,
        \qquad
        \lambda_a([C_2])=-1,
\]
and $\lambda_a([C])=0$ for every other boundary component $C$.

In the cyclic $m$-fold cover, a boundary component $C$ with $\lambda_a([C])=k$ has $\gcd(m,k)$ preimage components.  Therefore $C_1$ and $C_2$ each lift to one boundary component, while each of the remaining $b-2$ boundary components lifts to $m$ boundary components.  Hence
\[
        |\partial S_{a,m}|=2+m(b-2).
\]
Also
\[
        \chi(S_{a,m})=m\chi(S)=m(2-b).
\]
If $g$ is the genus of $S_{a,m}$, then
\[
        m(2-b)=\chi(S_{a,m})
        =2-2g-|\partial S_{a,m}|
        =2-2g-\bigl(2+m(b-2)\bigr)
        =m(2-b)-2g.
\]
Thus $g=0$.  Hence $S_{a,m}$ is planar.
\end{proof}

\subsection{The unwrapping criterion} We develop techniques here for resolving intersections by passing to covers.

Let $\pi:\tilde{S} \to S$ be a covering, let $\alpha: S^1 \to S $ be a closed loop and denote its lift by $\tilde{\alpha}$. Let    $q$ be a self-intersection of $\alpha$, and  $x,y\in S^1$ be the two preimages of $q$. We shall call the two local branches of $\alpha$ at $x,y$  the \emph{two branches of  $\alpha$ at $q$}. Choose one of the two self-intersection subloops $\beta$ obtained by traversing $\alpha$ from $x$ to $y$. Let
\[
\widetilde q=\widetilde\alpha(x).
\]
The lift of $\beta$ beginning at $\widetilde q$ ends at $\widetilde\alpha(y)$, which is another lift of $q$. Therefore the images of $x$ and $y$ under $\alpha$ lift to the same point of $\widetilde S$ (i.e., $\tilde\alpha(x) = \tilde\alpha(y)$) if and only if the lift of $\beta$ starting at $\widetilde q$ is a loop. Equivalently,
\[
\widetilde\alpha(x)=\widetilde\alpha(y),
\]
i.e., the lifted path $\widetilde\beta$ returns to $\widetilde q$. Thus the self-intersection $q$ survives in the lifted curve $\widetilde\alpha$ precisely when $\widetilde\beta$ is a loop.

\begin{lem}[No new intersections under lifting]\label{lem:no-new}
Let $\pi:\widetilde S\to S$ be a covering map, and let $\alpha$ lift to a closed curve $\widetilde\alpha$.  Then every self-intersection of $\widetilde\alpha$ maps to a self-intersection of $\alpha$. Consequently, lifting creates no new self-intersections.  Moreover, if at some self-intersection $q$ of $\alpha$ the two branches lift to two distinct points of $\widetilde S$, then the number of self-intersections of $\widetilde\alpha$ is strictly smaller than that of $\alpha$.
\end{lem}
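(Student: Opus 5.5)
The plan is to compare the self-intersection sets of $\alpha$ and $\widetilde\alpha$ directly via the covering map. Write $\alpha=\pi\circ\widetilde\alpha$ as maps $S^1\to S$. A self-intersection of $\widetilde\alpha$ is an unordered pair $\{x,y\}\subset S^1$ with $x\neq y$ and $\widetilde\alpha(x)=\widetilde\alpha(y)$. Applying $\pi$ gives $\alpha(x)=\alpha(y)$, so $\{x,y\}$ is also a self-intersection pair of $\alpha$.

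Thus we get an injective map from the set of self-intersection pairs of $\widetilde\alpha$ to that of $\alpha$, namely the identity on pairs in $S^1$. Its image point $\widetilde\alpha(x)$ maps to $\alpha(x)$, which is a self-intersection of $\alpha$. Injectivity is automatic, since both are sets of pairs in the same circle.

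Before counting, I will check that $\widetilde\alpha$ is again a general-position immersion, so that the count is meaningful. Since $\pi$ is a local diffeomorphism, $\widetilde\alpha$ is an immersion. Transversality at a double point of $\widetilde\alpha$ follows from transversality of the corresponding branches of $\alpha$, because $d\pi$ is an isomorphism. A triple point of $\widetilde\alpha$ would project to a triple point of $\alpha$, and $\alpha$ has none.

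Counting pairs then gives that the number of self-intersections of $\widetilde\alpha$ is at most that of $\alpha$. Hence no new self-intersections appear.

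For the strict inequality, suppose $q=\alpha(x)=\alpha(y)$ is a self-intersection of $\alpha$ whose two branches lift to distinct points, i.e.\ $\widetilde\alpha(x)\neq\widetilde\alpha(y)$. Because $\alpha$ has no triple points, $\{x,y\}$ is the unique pair of preimages of $q$. This pair is therefore not in the image of the injection above, so the injection is not surjective, and the count for $\widetilde\alpha$ is strictly smaller. Equivalently, by the discussion preceding the lemma, this happens exactly when the lift of the subloop $\beta$ from $\widetilde q$ fails to close up.

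There is no real obstacle here. The only point needing care is to phrase self-intersections as pairs of parameters in $S^1$ rather than as image points, so that the comparison is a literal inclusion of sets.
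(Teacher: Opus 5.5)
Your proposal is correct and matches the paper's proof. Both treat self-intersections as unordered parameter pairs in $S^1$. Since $\alpha=\pi\circ\widetilde\alpha$, the pairs for $\widetilde\alpha$ form a subset of those for $\alpha$, and strictness comes from the pair $\{x,y\}$ being absent upstairs. Your check that $\widetilde\alpha$ is again in general position is a harmless addition that the paper leaves implicit. Likewise, the uniqueness of the preimage pair of $q$ is not actually needed for the strict inequality.
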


\begin{proof}
If $x,y\in S^1$, $x\ne y$, and $\widetilde\alpha(x)=\widetilde\alpha(y)$, then applying $\pi$ gives $\alpha(x)=\alpha(y)$.  Thus every self-intersection pair of $\widetilde\alpha$ projects to a self-intersection pair of $\alpha$.  No new self-intersection pair can appear in the lift.

If the two branches at a self-intersection $q$ lift to distinct points, then the corresponding self-intersection pair of $\alpha$ does not give a self-intersection pair of $\widetilde\alpha$.  Since no new pairs appear, the total number strictly decreases.
\end{proof}

\begin{thm}[Planar finite-cover unwrapping criterion]\label{thm:unwrapping-criterion}
Let $S$ be a compact connected oriented planar surface with nonempty boundary.  Let $\alpha$ be a minimal immersed closed curve in $S$, and let $\beta$ be a simple essential self-intersection subloop of $\alpha$.  Suppose there exist a properly embedded arc $a\subset S$ joining two distinct boundary components and an integer $m\ge 2$ such that
\[
        \lambda_a([\alpha])\equiv 0 \pmod m,
        \qquad
        \lambda_a([\beta])\not\equiv 0 \pmod m.
\]
Then $\alpha$ has a closed lift to the planar finite cover $S_{a,m}\to S$, and this lift has strictly fewer self-intersections than $\alpha$.
\end{thm}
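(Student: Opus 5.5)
The plan is to verify the two claims in turn using covering space theory and the criterion established just before the statement.

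\emph{Step 1: the lift closes up.} The cover $S_{a,m}\to S$ is normal with deck group $\mathbb{Z}/m\mathbb{Z}$. It corresponds to the kernel $K$ of $\pi_1(S)\xrightarrow{\lambda_a}\mathbb{Z}\to\mathbb{Z}/m\mathbb{Z}$. Pick a basepoint $\alpha(x_0)$ on $\alpha$ and regard $\alpha$ as an element of $\pi_1(S,\alpha(x_0))$. Because $K$ is normal, whether this element lies in $K$ does not depend on the basepoint or on conjugation. Since $\lambda_a$ factors through $H_1(S;\mathbb{Z})$, it is determined by $[\alpha]$. The hypothesis $\lambda_a([\alpha])\equiv 0\pmod m$ therefore gives $\alpha\in K$. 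By the lifting criterion, any lift $\widetilde\alpha$ of the path $\alpha$ starting at a preimage of $\alpha(x_0)$ returns to its starting point, so it is a closed curve in $S_{a,m}$. By Lemma~\ref{lem:cyclic-planar}, $S_{a,m}$ is planar.

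\emph{Step 2: the self-intersection at the base of $\beta$ disappears.} Let $q$ be the self-intersection point at which $\beta$ is a subloop, with preimages $x,y\in S^1$, and let $\beta$ be the image of the arc from $x$ to $y$. The discussion preceding Lemma~\ref{lem:no-new} shows that $\widetilde\alpha(x)=\widetilde\alpha(y)$ if and only if the lift $\widetilde\beta$ of $\beta$ starting at $\widetilde\alpha(x)$ is a closed loop. This happens if and only if $\beta$, as an element of $\pi_1(S,q)$, lies in $K$, i.e.\ if and only if $\lambda_a([\beta])\equiv 0\pmod m$. The hypothesis excludes this, so the two branches of $\alpha$ at $q$ lift to distinct points of $S_{a,m}$. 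Lemma~\ref{lem:no-new} then says that $\widetilde\alpha$ has strictly fewer self-intersections than $\alpha$.

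The one point requiring care is the identification of the lifting behaviour of $\beta$ with $\lambda_a([\beta])\bmod m$. The subloop $\beta$ is based at $q$ rather than at the original basepoint, so one must use normality of $K$ (equivalently, that the monodromy of a closed loop in a regular abelian cover depends only on its homology class). Here the monodromy of a closed loop $\gamma$ is the deck transformation $\lambda_a([\gamma])\bmod m$. This gives that $\widetilde\beta$ ends at the translate of its starting point by $\lambda_a([\beta])\neq 0$ in $\mathbb{Z}/m\mathbb{Z}$, hence does not close up. The remaining hypotheses of the theorem, namely minimality of $\alpha$ and simplicity and essentiality of $\beta$, are not needed for this argument. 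They will matter when the theorem is applied, since essentiality of $\beta$ is what lets Lemma~\ref{lem:essential-separates} produce an arc $a$ with $\lambda_a([\beta])=\pm1$.
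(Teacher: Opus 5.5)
Your proposal is correct and follows the paper's proof essentially step for step. The closed lift exists because $\lambda_a([\alpha])\equiv 0 \pmod m$ places $\alpha$ in the subgroup defining the cover, and the self-intersection at the base of $\beta$ disappears because $\beta$ shifts sheets by the nonzero deck transformation $\lambda_a([\beta]) \bmod m$; Lemma~\ref{lem:no-new} and Lemma~\ref{lem:cyclic-planar} then finish, and you are right that minimality of $\alpha$ and simplicity and essentiality of $\beta$ are not used here (the paper does not use them either).
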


\begin{proof}
The congruence $\lambda_a([\alpha])\equiv 0\pmod m$ says exactly that the element represented by $\alpha$ lies in the subgroup defining the cyclic cover $S_{a,m}\to S$.  Hence $\alpha$ lifts to a closed curve $\widetilde\alpha$ in $S_{a,m}$.

At the chosen self-intersection, say $q$, the subloop $\beta$ changes the sheet by the deck transformation corresponding to $\lambda_a([\beta])\in \mathbb{Z}/m\mathbb{Z}$. Suppose that $x,y\in S^1$ map to $q$ under $\alpha$.  Since  $\lambda_a([\beta])$ is nonzero, the images of $x,y$ under $\alpha$  lift to two distinct points of $S_{a,m}$.  Lemma~\ref{lem:no-new} therefore shows that the lift has strictly fewer self-intersections.  Finally, Lemma~\ref{lem:cyclic-planar} shows that $S_{a,m}$ is planar.
\end{proof}

\begin{cor}[Null-homologous one-step unwrapping]\label{cor:null-one-step}
Let $1\ne f\in \pi_1(S)$, and let $\alpha$ be a minimal immersed representative of its free homotopy class.  Suppose that
\[
        [f]=0\in H_1(S;\mathbb{Z})
\]
and  $\alpha$ has a self-intersection.  For any integer $m>1$, there is an arc-cyclic $m$-cover $S'\to S$ such that $f$ lifts closed to $S'$ and the lifted curve has strictly fewer self-intersections.
\end{cor}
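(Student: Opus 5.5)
The plan is to reduce directly to Theorem~\ref{thm:unwrapping-criterion}. Since $\alpha$ is a minimal representative of a nontrivial free homotopy class and has a self-intersection, Lemma~\ref{lem:essential-subloop} provides a simple essential self-intersection subloop $\beta$ of $\alpha$. It then suffices to produce a properly embedded arc $a$ joining two distinct boundary components with $\lambda_a([\alpha])\equiv 0$ and $\lambda_a([\beta])\not\equiv 0 \pmod m$.

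\textbf{Choice of the arc.} Lemma~\ref{lem:essential-separates} gives such an arc $a$, joining boundary components on the two sides of $\beta$, which meets $\beta$ transversely in exactly one point. We should place $a$ in general position with respect to $\beta$, so that the single intersection is transverse. Then the algebraic intersection number satisfies $\lambda_a([\beta])=\pm 1$. Since $m>1$, this gives $\lambda_a([\beta])\not\equiv 0\pmod m$.

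\textbf{The condition on $\alpha$.} Because $\lambda_a$ factors through $H_1(S;\mathbb{Z})$ and $[f]=0$ there, we get $\lambda_a([\alpha])=0$, which is $\equiv 0 \pmod m$ for every $m$. Free homotopy does not matter here: $\alpha$ represents the conjugacy class of $f$, and conjugates have the same homology class.

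\textbf{Conclusion.} Theorem~\ref{thm:unwrapping-criterion} applied to $(\alpha,\beta,a,m)$ gives a closed lift of $\alpha$ to $S'=S_{a,m}$ with strictly fewer self-intersections. Since $f\in\ker(\pi_1(S)\to\mathbb{Z}/m)$, the element $f$ itself lifts to a closed loop in $S'$, and that loop is freely homotopic to the lifted curve. For the basepoint, take the basepoint of the cover over that of $S$ and the conjugating path lifted accordingly; the kernel is normal, so any conjugate of $f$ also lifts closed.

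\textbf{Main subtlety.} The one point that needs care is the intersection count $\lambda_a([\beta])=\pm1$. The intersection number is computed for the closed curve $\beta$ against the arc $a$, and $\beta$ passes through the point $q$. We should therefore check that the intersection point of $a$ with $\beta$ can be taken away from $q$ and away from the other strands of $\alpha$. This is harmless, because only $[\beta]\in H_1$ enters, and genericity of $a$ is allowed.
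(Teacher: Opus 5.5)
Your proposal is correct and follows essentially the paper's own proof. You take $\beta$ from Lemma~\ref{lem:essential-subloop} and an arc $a$ crossing $\beta$ once from Lemma~\ref{lem:essential-separates} to get $\lambda_a([\beta])=\pm1$, use $[f]=0$ for $\lambda_a([\alpha])=0$, and apply Theorem~\ref{thm:unwrapping-criterion}; your extra remarks on basepoints and on keeping the crossing point away from the corner at $q$ are harmless refinements the paper leaves implicit.
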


\begin{proof}
By Lemma~\ref{lem:essential-subloop}, $\alpha$ has a simple essential self-intersection subloop $\beta$.  By Lemma~\ref{lem:essential-separates}, choose a properly embedded arc $a$ joining boundary components on opposite sides of $\beta$ and crossing $\beta$ exactly once.  Then
\[
        \lambda_a([\beta])=\pm 1.
\]
Since $[f]=0\in H_1(S;\mathbb{Z})$, we have $\lambda_a([\alpha])=0$.  Applying Theorem~\ref{thm:unwrapping-criterion} with the given $m\ge 2$ gives the desired   arc-cyclic $m$-cover.
\end{proof}

\subsection{A finite planar tower}

The following statement is the key result needed in the $\RFRS$ argument for pure braid groups.

\begin{thm}[Planar tower detection]\label{thm:tower}
Let $S$ be a compact connected oriented planar surface with nonempty boundary, and let $1\ne f\in\pi_1(S)$.  For any integer $m>1$,  there is a finite tower of arc-cyclic $m$-covers
\[
        S_r\longrightarrow S_{r-1}\longrightarrow \cdots\longrightarrow S_0=S
\]
such that $f$ lifts closed through the whole tower, and the final lift $f_r\in\pi_1(S_r)$ has a nonzero image in $H_1(S_r;\mathbb{Z})$.
\end{thm}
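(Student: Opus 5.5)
Argue by induction on the number of self-intersections of a minimal immersed representative of the free homotopy class of $f$. At each stage, either the current lift is already nonzero in $H_1(S_i;\mathbb{Z})$ and we stop, or it is null-homologous and Corollary~\ref{cor:null-one-step} lets us pass to one more arc-cyclic $m$-cover in which the self-intersection number drops. Lifting is compatible with free homotopy, so we work with conjugacy classes. Since the cover $S_{i+1}\to S_i$ is normal (cyclic), a closed lift of one representative gives closed lifts of all conjugates. Hence "$f$ lifts closed" and "the image in $H_1$ is nonzero" depend only on the free homotopy class, up to the choice of lift, and every lift works equally well because deck transformations act by homeomorphisms.

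\textbf{Setup.} Set $S_0=S$, $f_0=f$, and let $\alpha_0$ be a minimal representative, with $n_0$ self-intersections. Inductively, suppose $S_i$ is planar (Lemma~\ref{lem:cyclic-planar}) and $f_i\in\pi_1(S_i)$ is a closed lift of $f$, represented by a minimal curve $\alpha_i$ with $n_i$ self-intersections. Note that $f_i\neq1$, since $\pi_1(S_i)\to\pi_1(S)$ is injective.

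\textbf{Step 1 (stop if nonzero).} If $[f_i]\neq0$ in $H_1(S_i;\mathbb{Z})$, we stop with $r=i$.

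\textbf{Step 2 (null-homologous, no self-intersection).} Suppose $[f_i]=0$ and $n_i=0$. Then $\alpha_i$ is an essential simple closed curve in a planar surface. By Lemma~\ref{lem:essential-separates} there is an arc $a$ meeting $\alpha_i$ exactly once, so $\lambda_a([\alpha_i])=\pm1$. This contradicts $[f_i]=0$, because $\lambda_a$ factors through $H_1$. So this case cannot occur: a simple essential curve in a planar surface is never null-homologous.

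\textbf{Step 3 (null-homologous with a self-intersection).} Suppose $[f_i]=0$ and $n_i>0$. Corollary~\ref{cor:null-one-step} gives an arc-cyclic $m$-cover $S_{i+1}\to S_i$ in which $\alpha_i$ lifts closed to a curve $\widetilde\alpha_i$ with fewer than $n_i$ self-intersections. Let $f_{i+1}$ be the class of $\widetilde\alpha_i$ and $\alpha_{i+1}$ a minimal representative of it. Then $n_{i+1}\le\#\widetilde\alpha_i<n_i$.

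\textbf{Termination.} The $n_i$ are nonnegative integers that strictly decrease, so the process stops after at most $n_0$ steps. By Step 2 the final $f_r$ is nonzero in $H_1(S_r;\mathbb{Z})$. Each stage is an arc-cyclic $m$-cover and $f$ lifts closed throughout the tower.

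\textbf{Main obstacle: basepoints versus free homotopy.} We must check that $f_{i+1}$, the element obtained from the curve lift, really is a closed lift of $f$ in the tower sense. A closed lift of $\alpha_i$ to $S_{i+1}$ means the conjugacy class lies in the normal subgroup $\pi_1(S_{i+1})$, and normality makes this basepoint-independent. Choosing basepoints on the lifted curves gives elements $f_i$ mapping to conjugates of $f$; conjugating back via a path identifies $f$ itself with an element of $\pi_1(S_r)$ for a suitable basepoint lift. If the paper requires $f$ itself rather than a conjugate, one can take $\alpha_0$ through the basepoint and choose the basepoint lifts along the lifted curves. The non-vanishing in $H_1(S_r)$ is unaffected, since conjugate elements have the same homology class.

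A secondary point is that minimal representatives exist in every class and that replacing $\widetilde\alpha_i$ by a minimal representative only lowers the count. Both are immediate, because the self-intersection number is a nonnegative integer.
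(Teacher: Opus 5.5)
Your proof is correct and follows essentially the same route as the paper. Both arguments induct on the minimal self-intersection number: stop once the lift is nonzero in integral homology, and otherwise apply Corollary~\ref{cor:null-one-step} to pass to an arc-cyclic $m$-cover in which the self-intersection number strictly drops. The only differences are minor: you exclude a null-homologous simple essential curve using the arc of Lemma~\ref{lem:essential-separates} meeting it once (so $\lambda_a=\pm1$), where the paper writes its class as a nonempty proper sum of boundary classes, and your basepoint bookkeeping makes explicit a point the paper leaves implicit.
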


\begin{proof}
Let $f_i\in\pi_1(S_i)$ denote the current lift of $f$.  If
\(
        [f_i]\ne 0\in H_1(S_i;\mathbb{Z}),
\)
we stop.  Otherwise,  we have
\(
        [f_i]=0\in H_1(S_i;\mathbb{Z}).
\)
Choose a minimal immersed representative $\alpha_i$ of the free homotopy class of $f_i$.

If $\alpha_i$ has no transverse self-intersections, then its image is an essential simple closed curve.  In a planar surface, every essential simple closed curve separates the boundary components into two nonempty collections.  Its homology class is the sum, up to sign, of the boundary classes on one side.  Since the only relation among boundary classes is that the total sum is zero, the sum over a nonempty proper subcollection is nonzero in $H_1(S_i;\mathbb{Z})$.  Thus $[f_i]\ne 0\in H_1(S_i;\bZ)$, contradicting the assumption that $[f_i]=0$.  Hence $\alpha_i$ has a self-intersection.

By Corollary~\ref{cor:null-one-step}, there is an arc-cyclic $m$-cover
\[
        S_{i+1}\to S_i
\]
such that $f_i$ lifts closed to $f_{i+1}\in\pi_1(S_{i+1})$ and the minimal self-intersection number of the lifted free homotopy class is strictly smaller.  By Lemma~\ref{lem:cyclic-planar}, $S_{i+1}$ is planar.

At every nonterminal stage the minimal self-intersection number strictly decreases, and it is a nonnegative integer.  Therefore the process terminates after finitely many steps.  At the terminal stage, the lift has a nonzero homology class.
\end{proof}

To prove that the pure braid groups are $\RFRp$, we need the following   strengthening (which is Theorem \ref{athm:maintower}) of Theorem~\ref{thm:tower}.

\begin{thm}[Planar tower detection mod \(p\)] \label{thm:tower-p}
Let \(S\) be a compact connected oriented planar surface with nonempty
boundary, and let \(1\ne f\in\pi_1(S)\). For any prime \(p\), there is
a finite tower of arc-cyclic \(p\)-covers
\[
        S_r\longrightarrow S_{r-1}\longrightarrow \cdots\longrightarrow S_0=S
\]
such that \(f\) lifts closed through the whole tower,  $S_i$ is planar  for all $i$, and the final lift
\(f_r\in\pi_1(S_r)\) has nonzero image in
\(H_1(S_r;\mathbb Z/p)\).
\end{thm}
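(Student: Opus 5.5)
The plan is to run the same descent on self-intersection numbers as in Theorem~\ref{thm:tower}, with $m=p$, but with the stopping condition weakened to ``$[f_i]\neq 0\in H_1(S_i;\mathbb Z/p)$''. At each stage $S_i$ is planar and $f_i\in\pi_1(S_i)$ is the current closed lift of $f$; it is nontrivial because $\pi_1(S_i)\to\pi_1(S)$ is injective. If $[f_i]\ne 0$ in $H_1(S_i;\mathbb Z/p)$ we stop. Otherwise $[f_i]\equiv 0 \pmod p$, and we want to produce an arc-cyclic $p$-cover $S_{i+1}\to S_i$ to which $f_i$ lifts closed with strictly smaller minimal self-intersection number. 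Since that number is a nonnegative integer, the process terminates, and planarity at each stage is Lemma~\ref{lem:cyclic-planar}.

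\textbf{Step 1 (simple curves are primitive mod $p$).} Let $\alpha_i$ be a minimal immersed representative of the free homotopy class of $f_i$. If $\alpha_i$ has no self-intersection, it is an essential simple closed curve. By Lemma~\ref{lem:essential-separates} it separates the boundary components into two nonempty proper collections, so $[\alpha_i]=\pm\sum_{C\in\mathcal C}[C]$ for a nonempty proper subcollection $\mathcal C$. Write $\partial S_i=\{C_1,\dots,C_b\}$. Then $H_1(S_i;\mathbb Z)$ is free on $[C_1],\dots,[C_{b-1}]$, and $[C_b]=-\sum_{j<b}[C_j]$ for suitable orientations. We may arrange $C_b\notin\mathcal C$ by replacing $\mathcal C$ with its complement. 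Then $[\alpha_i]$ has all coordinates in $\{0,\pm1\}$, not all zero, so it is nonzero mod $p$. Hence in the nonterminal case $\alpha_i$ must have a self-intersection.

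\textbf{Step 2 (one unwrapping step mod $p$).} By Lemma~\ref{lem:essential-subloop} there is a simple essential self-intersection subloop $\beta$ of $\alpha_i$. By Lemma~\ref{lem:essential-separates} there is a properly embedded arc $a$ joining boundary components on opposite sides of $\beta$ and meeting it once, so $\lambda_a([\beta])=\pm1\not\equiv 0\pmod p$. Since $\lambda_a$ is a homomorphism to $\mathbb Z$ and $[\alpha_i]\in pH_1(S_i;\mathbb Z)$, we get $\lambda_a([\alpha_i])\equiv 0\pmod p$. (This uses that $H_1(S_i;\mathbb Z)$ is free, so vanishing in $H_1(S_i;\mathbb Z/p)=H_1(S_i;\mathbb Z)\otimes\mathbb Z/p$ means divisibility by $p$.) Theorem~\ref{thm:unwrapping-criterion} with $m=p$ then gives a closed lift of $\alpha_i$ to $S_{a,p}$ with strictly fewer self-intersections. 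Its minimal representative has at most that many self-intersections, so the minimal self-intersection number strictly decreases.

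\textbf{Main obstacle.} The only new point compared with Theorem~\ref{thm:tower} is that the terminal condition may now be reached by a curve whose integral class is nonzero but divisible by $p$. This is handled by the mod-$p$ form of the hypothesis of Theorem~\ref{thm:unwrapping-criterion} together with the primitivity argument of Step~1. The delicate point I would check carefully is Step~1: that the class of an essential separating simple curve, written in the basis $[C_1],\dots,[C_{b-1}]$, really has coefficients $0,\pm1$ after choosing the side not containing $C_b$. The boundary orientations must be chosen consistently as the induced orientations from one side, so that the only relation is $\sum_j[C_j]=0$. One should also note that proper powers cause no issue: their minimal representatives are non-simple, so Step~2 applies to them verbatim.
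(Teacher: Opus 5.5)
Your proof is correct, but it is organized differently from the paper's.

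\textbf{The paper's route.} It applies Theorem~\ref{thm:tower} with $m=p$ as a black box, which produces a lift that is nonzero in $H_1(S_s;\mathbb Z)$. It then runs a second, purely homological descent on $p$-divisibility. Writing $[g]=p^k u$ with $u\notin pH_1(T;\mathbb Z)$, it picks an arc $a$ with $\lambda_a(u)\not\equiv 0 \pmod p$, which is possible because arc characters span $H^1(T;\mathbb Z/p)$. It then shows that the lifted class $x'$ is not divisible by $p^k$. The reason is that $q_*H_1(T';\mathbb Z)$ lies in $\ker(\lambda_a \bmod p)$, and $H_1(T;\mathbb Z)$ is torsion-free.

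\textbf{Your route.} You rerun the self-intersection descent of Theorem~\ref{thm:tower} directly with the weaker stopping rule. Two observations make this work:
\begin{enumerate}
\item The hypothesis of Theorem~\ref{thm:unwrapping-criterion} only needs $\lambda_a([\alpha])\equiv 0 \pmod p$. This follows from $[\alpha]\in pH_1(S_i;\mathbb Z)$, which is equivalent to vanishing in $H_1(S_i;\mathbb Z/p)$ since $H_1$ is free.
\item The class of an essential simple closed curve is primitive. Its coefficients are $0,\pm 1$ in the basis $[C_1],\dots,[C_{b-1}]$ once you take the side not containing $C_b$. So the descent cannot stall at a simple curve that is zero mod $p$.
\end{enumerate}

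\textbf{Comparison.} Your argument is a single induction on one complexity, the minimal self-intersection number, and typically gives a shorter tower. The paper's argument leaves Theorem~\ref{thm:tower} untouched and isolates a divisibility-reduction step. That step applies to any nonzero integral class and needs no further geometric input about curves.
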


\begin{proof}

Apply Theorem~\ref{thm:tower} with \(m=p\). We obtain a finite tower of
arc-cyclic \(p\)-covers
\[
S_s\longrightarrow S_{s-1}\longrightarrow \cdots \longrightarrow S_0=S
\]
such that \(f\) lifts closed through the tower and the final lift
\(f_s\in \pi_1(S_s)\) satisfies
\[
0\ne [f_s]\in H_1(S_s;\mathbb Z).
\]
If the image of \([f_s]\) in \(H_1(S_s;\mathbb Z/p)\) is nonzero, then
we are done. Thus suppose that \([f_s]\in pH_1(S_s;\mathbb Z)\).

We claim that, whenever \(T\) is a compact connected oriented planar
surface with nonempty boundary and \(g\in\pi_1(T)\) satisfies
\[
0\ne [g]\in H_1(T;\mathbb Z),
\]
one can pass to an arc-cyclic \(p\)-cover to strictly decrease the
\(p\)-divisibility of \([g]\), provided \([g]\) is divisible by \(p\).
Indeed, write
\[
[g]=p^k u
\]
where \(k\ge 1\) is maximal and \(u\notin pH_1(T;\mathbb Z)\). Since
\(T\) is planar, the homomorphisms \(\lambda_a\) obtained by algebraic
intersection with properly embedded arcs \(a\) joining distinct boundary
components span \(H^1(T;\mathbb Z/p)\). Hence we may choose such an arc
\(a\) with
\[
\lambda_a(u)\not\equiv 0 \pmod p.
\]
Let
\[
q:T'=T_{a,p}\longrightarrow T
\]
be the corresponding arc-cyclic \(p\)-cover. Since
\[
\lambda_a([g])=\lambda_a(p^k u)\equiv 0 \pmod p,
\]
the element \(g\) lifts closed to some \(g'\in\pi_1(T')\). Put
\(x'=[g']\in H_1(T';\mathbb Z)\). Then
\[
q_*(x')=[g]=p^k u,
\]
implying \(x'\ne 0\).

We show that \(x'\notin p^kH_1(T';\mathbb Z)\). If \(x'=p^k y\), then
\[
p^k q_*(y)=q_*(x')=p^k u.
\]
Since \(H_1(T;\mathbb Z)\) is torsion-free, it follows that
\(q_*(y)=u\). But every class in the image of \(q_*:H_1(T';\mathbb Z)\to
H_1(T;\mathbb Z)\) lies in the kernel of
\[
H_1(T;\mathbb Z)\xrightarrow{\lambda_a}\mathbb Z\longrightarrow
\mathbb Z/p,
\]
because \(T'\to T\) is the cover corresponding to this mod-\(p\)
character. Thus \(\lambda_a(u)\equiv 0\pmod p\), contradicting the choice
of \(a\). Therefore, the maximal \(p\)-divisibility of \(x'\) is strictly
smaller than that of \([g]\).

Starting with \(T=S_s\) and \(g=f_s\), repeat the preceding step as long
as the current integral homology class is divisible by \(p\). The
maximal \(p\)-divisibility is a nonnegative integer and strictly
decreases at each step, which implies the process terminates after finitely many
steps. We therefore obtain a further finite tower of arc-cyclic
\(p\)-covers
\[
S_r\longrightarrow \cdots \longrightarrow S_s
\]
such that the final lift \(f_r\in \pi_1(S_r)\) satisfies
\[
[f_r]\notin pH_1(S_r;\mathbb Z).
\]
Equivalently, the image of \(f_r\) in \(H_1(S_r;\mathbb Z/p)\) is
nonzero.

Concatenating this tower with the tower supplied by Theorem~\ref{thm:tower} gives
the desired finite tower of \(p\)-covers. Since all covers used are
arc-cyclic covers of planar surfaces, the intermediate surfaces remain
planar.
\end{proof}

\section{Boundary control}\label{sec:b-contr}

A group \(B\) acts \emph {isotopically} on a compact planar surface \(S\) if it acts on \(S\) by orientation-preserving mapping classes fixing the basepoint and  the boundaries of \(S\) pointwise. In other words, up to isotopy, each element in $B$ is viewed as a homeomorphism of $S$ via a group homomorphism $B\rightarrow \Map(S)$. Note that we do not require the homomorphism to be injective here. An isotopical action induces a group action on the fundamental group $\pi_1(S)$ with the basepoint in the boundary of $S$. Note that the induced action  on \(H_1(S;\bZ)\) is trivial, because \(H_1(S;\bZ)\) is generated by the boundary classes, with the single relation that their total sum is zero.
\begin{thm}[Boundary lifting]\label{lem:boundary-correction}
Let \(S\) be a compact planar surface with fundamental group \(K=\pi_1(S)\), and let \(B\) act isotopically on \(S\). Let \(S'\to S\) be an arc-cyclic $m$-cover with corresponding subgroup
\(
K'\leq K.
\)
Then \(K'\) is \(B\)-invariant, and there is a finite-index normal subgroup
\(
B^+\leq B
\)
such that:
\begin{enumerate}[label=\textup{(\roman*)}]
\item The canonical lifts of elements of \(B^+\) to \(S'\) preserve every boundary component of \(S'\) pointwise; in other words, there is an induced isotopical action of  $B^+$  on $S'$.
\item
\[
K\rtimes B\geq K'\rtimes B\geq K'\rtimes B^+
\]
is a partial $\RFRS$ tower.

\item If \(m=p\) is a prime, then the two-step tower\[
K\rtimes B\geq K'\rtimes B\geq K'\rtimes B^+
\]
is a partial $\RFRp$ tower.

\end{enumerate}
\end{thm}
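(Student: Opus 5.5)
Since $B$ acts isotopically, its induced action on $H_1(S;\bZ)$ is trivial, as noted before the statement. The subgroup $K'$ is the kernel of $K\to H_1(S;\bZ)\xrightarrow{\lambda_a}\bZ\to\bZ/m$, and this map factors through $H_1$. Hence for $b\in B$ and $k\in K$ we have $\lambda_a(b\cdot k)=\lambda_a(k)$, so $K'$ is $B$-invariant.

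I will fix the basepoint $*\in\partial S$ and a lift $\tilde *\in\partial S'$. Each homeomorphism representing $b\in B$ fixes $\partial S$ pointwise and preserves $K'$. So it has a unique lift $\tilde b$ to $S'$ fixing $\tilde *$; this is the canonical lift. Its isotopy class rel boundary over $\partial S$ is well defined, because isotopies rel $\partial S$ lift uniquely starting from the identity.

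The canonical lift $\tilde b$ commutes with the deck group $\bZ/m$, since it is determined by the $B$-invariant subgroup $K'$ and fixes $\tilde *$. It covers a map fixing $\partial S$ pointwise, so on each component $\tilde C$ of $\partial S'$ over $C\subset\partial S$ it acts as a deck transformation: it maps $\tilde C$ to $g\tilde C$ by the deck element $g$ for some $g\in\bZ/m$. This gives a homomorphism $\phi\colon B\to \mathrm{Map}(\pi_0\partial S')\ltimes\cdots$; more concretely, I record for each component of $\partial S'$ the deck element by which $\tilde b$ acts. The assignment $b\mapsto(\text{deck elements})$ is a homomorphism $\phi\colon B\to(\bZ/m)^{|\partial S'|}$. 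It is a homomorphism because canonical lifts compose (the composition of lifts fixing $\tilde *$ fixes $\tilde *$) and because they commute with deck transformations.

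Set $B^+=\ker\phi$, a finite-index normal subgroup of $B$. Elements of $B^+$ act as the identity on every boundary component of $S'$, which gives (i). I expect this well-definedness to be the main obstacle. I need that the lifted map agrees with the identity pointwise on $\partial S'$, and not merely up to isotopy. Since the lift restricted to $\tilde C$ is a lift of the identity of $C$, it is a deck transformation restricted to $\tilde C$, which settles this. I also need the homomorphism property to be compatible with isotopies rel boundary.

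For (ii), normality holds throughout: $K'\rtimes B$ is normal in $K\rtimes B$ since $K'$ is normal and $B$-invariant, and $K'\rtimes B^+$ is normal in $K'\rtimes B$ since $B^+$ is normal and $K'$ is normal in $K'\rtimes B$. For the first step, the map $K\rtimes B\to\bZ/m$, $(k,b)\mapsto\lambda_a(k)\bmod m$, is a homomorphism because $B$ acts trivially on $H_1$. Its kernel is $K'\rtimes B$, and this quotient is abelian, with quotient $\bZ/m$. Since $\lambda_a$ extends to a map to $\bZ$ (via $(k,b)\mapsto\lambda_a(k)$), the quotient is a quotient of a free-abelian image, so $\Qrad(K\rtimes B)\le K'\rtimes B$.

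For the second step, I need $\Qrad(K'\rtimes B)\le K'\rtimes B^+$. I will show that $\phi$ factors through $H_1(K'\rtimes B;\bQ)$. Concretely, the deck element by which $\tilde b$ acts on $\tilde C$ is the change in $\lambda_a$ along the path from $\tilde *$ to $\tilde C$ and its image, measured by a loop. The key input is that the lift $\tilde b$ acts on $H_1(S';\bZ)$, whose boundary classes each have $\lambda$-type functionals. I expect that $\phi(b)$ can be computed as $\lambda_{a'}$ of the loop $\gamma^{-1}\cdot \tilde b(\gamma)$, where $\gamma$ is a path from $\tilde *$ to $\tilde C$ and $a'$ is a lifted arc. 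This is the kind of homomorphism $K'\rtimes B\to\bZ$ (extended by zero on $K'$, or rather by a crossed-homomorphism that is an honest homomorphism on the semidirect product) with values in $\bZ$ reduced mod $m$.

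Showing that this crossed homomorphism, reduced mod $m$, comes from a $\bZ$-valued homomorphism on $K'\rtimes B$ is the delicate point. If that fails, I will instead pass to $B^+$ defined as the kernel of $B\to H_1(B;\bZ)_{\mathrm{tf}}\otimes\bZ/m$ composed appropriately. For (iii), the same argument with $m=p$ gives quotients $\bZ/p$ and a subgroup of $(\bZ/p)^N$, which are elementary abelian $p$-groups.
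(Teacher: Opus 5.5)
You handle $B$-invariance of $K'$, normality, the first step $K\rtimes B\geq K'\rtimes B$, and part (i) correctly. Two small remarks. Your canonical lift commutes with the deck group because $B$ acts trivially on $K/K'\cong\bZ/m$, not merely because $K'$ is invariant. Also, your $\phi(b)$ takes the same value on all components of $\partial S'$ over a given $C\subset\partial S$.

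The genuine gap is the second step of (ii), $\Qrad(K'\rtimes B)\leq K'\rtimes B^+$, which you leave open and on which (iii) also rests. Your $B^+=\ker\phi$ is the kernel of a homomorphism to a finite group. Nothing you prove prevents $\phi$ from being nonzero on elements whose image in $H_1(B;\bZ)$ is torsion, so $\Qrad(B)\leq B^+$ is not established. The crossed-homomorphism computation with $\lambda_{a'}$ on $S'$ is only a guess. The fallback does not escape the problem. With $B^+=\ker\bigl(B\to H_1(B;\bZ)_{\mathrm{tf}}\otimes\bZ/m\bigr)$ the $\Qrad$ condition holds, but (i) then needs this subgroup to lie in $\ker\phi$. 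That amounts to the same integrality statement about $\phi$, and finite index would also need $B$ finitely generated, which the statement does not assume.

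The missing idea is to measure the boundary displacement in the infinite cyclic cover $S_\lambda\to S$ of $\ker\lambda$, with $\lambda=\lambda_a$ the integral character, rather than in $S'$. Since $B$ preserves $\lambda$, the lift $\widetilde f_b$ to $S_\lambda$ fixing a lift $\widetilde x_0$ of the basepoint (your $*$) commutes with the deck generator $t$. For $z_C\in C$ with a chosen lift $\widetilde z_C$, there is a unique integer $e_C(b)$ with $\widetilde f_b(\widetilde z_C)=t^{e_C(b)}\widetilde z_C$. It does not depend on the representative: a lifted isotopy rel $\partial S$ keeps this point in a discrete fiber. Commutation with $t$ gives $e_C(b_1b_2)=e_C(b_1)+e_C(b_2)$, so $e_C\colon B\to\bZ$ is a homomorphism.

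Equivalently, up to sign, $e_C(b)$ is $\lambda$ of the loop formed by $f_b(\bar\gamma)$ followed by $\bar\gamma$ backwards, for a path $\bar\gamma$ in $S$ from $x_0$ to $z_C$. This is the correct form of your guess: it uses the integral $\lambda$ on $S$, not $\lambda_{a'}$ on $S'$. It is an honest homomorphism on $B$ because $B$ acts trivially on $H_1(S;\bZ)$.

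Since $S'=S_\lambda/\langle t^m\rangle$ and your canonical lift is induced by $\widetilde f_b$, the entry of $\phi(b)$ at any lift of $C$ is $e_C(b)\bmod m$. Each $e_C\circ\mathrm{pr}\colon K'\rtimes B\to\bZ$ is $\bZ$-valued, hence kills $\Qrad(K'\rtimes B)$. This gives $\Qrad(K'\rtimes B)\leq K'\rtimes B^+$. For $m=p$, $B/B^+$ embeds in $(\bZ/p)^{|\partial S|}$, which yields (iii).
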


\begin{proof}

Let the arc-cyclic \(m\)-cover be defined by the mod-\(m\) reduction of
an integral character
\[
\lambda\colon K\to \bZ.
\]
Since \(B\) acts trivially on \(H_1(S;\bZ)\), it preserves \(\lambda\).
Hence it preserves
\[
K'=\ker\bigl(K\overset{\lambda}{\longrightarrow}\bZ
\longrightarrow \bZ/m\bZ\bigr),
\]
and therefore \(K'\rtimes B\) is a subgroup of \(K\rtimes B\).

We claim that \(K\rtimes B\geq K'\rtimes B\) is a partial $\RFRS$ tower.
Indeed, \(\lambda\) extends to a homomorphism
\[
\widehat\lambda\colon K\rtimes B\to \bZ
\]
by declaring \(\widehat\lambda\) to be zero on \(B\). This is well-defined
because \(B\) preserves \(\lambda\). The quotient map
\[
K\rtimes B\longrightarrow (K\rtimes B)/(K'\rtimes B)\cong \bZ/m\bZ
\]
is the mod-\(m\) reduction of \(\widehat\lambda\). Therefore every element
of \(\Qrad(K\rtimes B)\) lies in \(K'\rtimes B\).

We proceed to define \(B^+\). Let
\[
S_\lambda\to S
\]
be the infinite cyclic cover corresponding to \(\ker(\lambda)\), and let
\(t\) be a chosen generator of its deck group. Choose a lift
\(\widetilde x_0\) of the basepoint \(x_0\) in the boundary to \(S_\lambda\). Since every
\(b\in B\) preserves \(\lambda\), the homeomorphism \(f_b\) representing
\(b\) has a unique canonical lift \(\widetilde f_b\) to \(S_\lambda\)
fixing \(\widetilde x_0\) \cite[Proposition 1.33 and 1.34]{Hat02}. This lift
commutes with \(t\).

For each boundary component \(C\subset \partial S\), choose a point
\(z_C\in C\). If the basepoint \(x_0\) lies on \(C\), we choose
\(z_C=x_0\). Choose a lift \(\widetilde z_C\in S_\lambda\) of \(z_C\),
taking \(\widetilde z_C=\widetilde x_0\) when \(z_C=x_0\). Since \(f_b\)
fixes \(z_C\), there is a unique integer \(e_C(f_b)\) such that
\[
\widetilde f_b(\widetilde z_C)=t^{e_C(f_b)}\widetilde z_C.
\]

Note that if \(g_b\in \Homeo(S,\partial S)\) is another homeomorphism
representing \(b\), then \(g_b\) is isotopic to \(f_b\) relative to
\(\partial S\) and the basepoint. Lifting this isotopy to \(S_\lambda\),
starting at the canonical lift of \(f_b\), gives an isotopy to the
canonical lift of \(g_b\). Since the isotopy fixes \(z_C\) downstairs,
the lifted path starting at \(\widetilde f_b(\widetilde z_C)\) stays in
the discrete fiber over \(z_C\), and hence is constant. Therefore
\[
e_C(f_b)=e_C(g_b).
\]
We denote \(e_C(b)=e_C(f_b)\).

Choosing representatives \(f_{b_1}\) and \(f_{b_2}\), the composition
\(\widetilde f_{b_1}\widetilde f_{b_2}\) is the canonical lift of the
representative \(f_{b_1}f_{b_2}\) of \(b_1b_2\). Since canonical lifts
commute with \(t\), we have
\[
\begin{aligned}
\widetilde f_{b_1b_2}(\widetilde z_C)
&=\widetilde f_{b_1}\widetilde f_{b_2}(\widetilde z_C) \\
&=\widetilde f_{b_1}\bigl(t^{e_C(b_2)}\widetilde z_C\bigr) \\
&=t^{e_C(b_2)}\widetilde f_{b_1}(\widetilde z_C) \\
&=t^{e_C(b_2)}t^{e_C(b_1)}\widetilde z_C \\
&=t^{e_C(b_1)+e_C(b_2)}\widetilde z_C .
\end{aligned}
\]
Thus
\[
e_C(b_1b_2)=e_C(b_1)+e_C(b_2).
\]
Hence
\[
e_C\colon B\to \bZ
\]
is a homomorphism.

The cover \(S'\) is the quotient of \(S_\lambda\) by the subgroup
generated by \(t^m\). Let
\[
\overline f_b\colon S'\to S'
\]
be the lift induced by \(\widetilde f_b\). Define
\[
B^+=\bigcap_{C\subset \partial S}
\ker\bigl(B\overset{e_C}{\longrightarrow}\bZ
\longrightarrow \bZ/m\bZ\bigr),
\]
where \(C\) ranges over all boundary components of \(S\). Then \(B^+\)
is finite-index normal in \(B\).

We show that the canonical lifts of elements of \(B^+\) fix every
boundary component of \(S'\) pointwise. Let \(b\in B^+\). Then
\[
e_C(b)\equiv 0\pmod m
\]
for every boundary component \(C\subset \partial S\). Let \(D\) be a
boundary component of \(S'\) lying over \(C\). Then \(D\) contains a point
of the form \([t^j\widetilde z_C]\) for some \(j\), where brackets denote
the image in the quotient \(S_\lambda/\langle t^m\rangle=S'\). Since
\(\widetilde f_b\) commutes with \(t\), we have
\[
\begin{aligned}
\overline f_b([t^j\widetilde z_C])
&=[t^j\widetilde f_b(\widetilde z_C)] \\
&=[t^{j+e_C(b)}\widetilde z_C] \\
&=[t^j\widetilde z_C],
\end{aligned}
\]
where the last equality uses \(e_C(b)\equiv 0\pmod m\). Hence
\(\overline f_b\) fixes a point of \(D\), and therefore sends \(D\) to
itself.

Since \(f_b\) fixes \(C\) pointwise, the restriction
\[
\overline f_b|_D\colon D\to D
\]
is a lift of the identity map on \(C\). A lift of the identity map of a
circle which fixes one point is the identity. Therefore \(\overline f_b\)
fixes \(D\) pointwise. Since \(D\) was arbitrary, every boundary component
of \(S'\) is fixed pointwise. In particular, we have an isotopical action
of \(B^+\) on \(S'\). This proves (i).

Since each \(e_C\) is an integral homomorphism, it vanishes on
\(\Qrad(B)\). Hence
\[
\Qrad(B)\leq B^+.
\]
Now consider the projection
\[
K'\rtimes B\longrightarrow B.
\]
If \(x\in \Qrad(K'\rtimes B)\), then its image in \(B\) lies in
\(\Qrad(B)\), and hence lies in \(B^+\). Therefore
\[
\Qrad(K'\rtimes B)\leq K'\rtimes B^+.
\]
This proves that the second step is also $\RFRS$-legal. The normality and
finite-index conditions are immediate from the definitions of \(K'\) and
\(B^+\).

When \(m=p\), we have
\[
(K\rtimes B)/(K'\rtimes B)\cong \bZ/p\bZ,
\]
and
\[
(K'\rtimes B)/(K'\rtimes B^+)\cong B/B^+.
\]
For each boundary component \(C\subset \partial S\), let
\[
B_C=\ker\bigl(B\overset{e_C}{\longrightarrow}\bZ
\longrightarrow \bZ/p\bZ\bigr).
\]
Since \(B^+\) is the intersection of all \(B_C\), we have an injection
\[
B/B^+ \longrightarrow \prod_C B/B_C.
\]
Each \(B/B_C\) is a subgroup of \(\bZ/p\bZ\). Hence \(B/B^+\) is an
elementary abelian \(p\)-group. The required containment of
\(\Qrad\) is exactly the one proved above.
\end{proof}

\section{The semidirect product step}

In this section, we prepare the proof of Theorem \ref{athm-rfrs} by establishing the following.

\begin{prop}\label{prop:semidirect}
Let \(S\) be a compact connected planar surface with nonempty boundary, and fundamental group
\(
K= \pi_1(S).
\)
Let \(B\) be a finitely generated $\RFRS$ (resp. $\RFRp$ for some prime $p$) group acting isotopically on \(S\). Then
\(
K\rtimes B
\)
is $\RFRS$ (resp. $\RFRp$).
\end{prop}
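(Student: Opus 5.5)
We will use the tower criteria Lemma~\ref{lem:partial-criterion} and Lemma~\ref{rfrp tower}. Since $K$ is free of finite rank and $B$ is finitely generated, $G=K\rtimes B$ is finitely generated. So it suffices to detect every nontrivial $g=(k,b)\in G$ by a partial tower starting at $G$.

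\textbf{Case $b\neq 1$.} We pull back a tower for $B$ along the projection $\rho\colon G\to B$. Take a partial $\RFRS$ (resp.\ $\RFRp$) tower $B=B_0\geq\cdots\geq B_N$ detecting $b$, and set $G_i=\rho^{-1}(B_i)=K\rtimes B_i$. These subgroups are normal of finite index, and $G_i/G_{i+1}\cong B_i/B_{i+1}$. If $x\in\Qrad(G_i)$, then $\rho(x)\in\Qrad(B_i)\leq B_{i+1}$, so $x\in G_{i+1}$. For $\RFRp$, detection means $b\in B_{N-1}\setminus B_N$, which gives $g\in G_{N-1}\setminus G_N$. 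For $\RFRS$, $b\in B_N$ with nonzero image in $H_1(B_N;\bQ)$. Since $\rho$ induces a surjection $H_1(G_N;\bQ)\to H_1(B_N;\bQ)$, the element $g$ also has nonzero image in $H_1(G_N;\bQ)$.

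\textbf{Case $b=1$, $k\neq 1$.} Apply Theorem~\ref{thm:tower} (resp.\ Theorem~\ref{thm:tower-p} with $m=p$) to $k$. This gives a tower of arc-cyclic covers $S_r\to\cdots\to S_0=S$, with subgroups $K=K_0\geq K_1\geq\cdots\geq K_r$ containing $k$, such that $[k]\neq 0$ in $H_1(K_r;\bZ)$ (resp.\ in $H_1(K_r;\bZ/p)$). Now climb the tower by repeatedly applying Theorem~\ref{lem:boundary-correction}. Start with $B^{(0)}=B$ acting isotopically on $S_0$. Given an isotopical action of $B^{(i)}$ on $S_i$, the theorem gives the two-step partial tower
\[
K_i\rtimes B^{(i)}\geq K_{i+1}\rtimes B^{(i)}\geq K_{i+1}\rtimes B^{(i+1)}
\]
and an isotopical action of $B^{(i+1)}$ on $S_{i+1}$. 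Here we need to check that $K_{i+1}$ is the subgroup of the arc-cyclic cover of $S_i$, with the basepoint chosen on the boundary of each $S_i$. Concatenating these steps gives a partial tower from $G$ down to $K_r\rtimes B^{(r)}$ in which every step is legal.

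\textbf{Detection in the second case.} Since $B^{(r)}$ acts isotopically on the planar surface $S_r$, it acts trivially on $H_1(S_r;\bZ)$. Therefore
\[
H_1(K_r\rtimes B^{(r)};\bZ)\cong H_1(K_r;\bZ)_{B^{(r)}}\oplus H_1(B^{(r)};\bZ)=H_1(K_r;\bZ)\oplus H_1(B^{(r)};\bZ).
\]
In the $\RFRS$ case, $H_1(K_r;\bZ)$ is free abelian and $[k]\neq 0$ there, so $k$ has nonzero image in $H_1(-;\bQ)$ of the terminal group, and the tower detects $k$.

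In the $\RFRp$ case, we append one more step. Choose a homomorphism $\phi\colon H_1(K_r;\bZ)\to\bZ/p$ with $\phi([k])\neq 0$; this is possible because $[k]\not\equiv 0$ mod $p$. Extend $\phi$ by zero on $B^{(r)}$, and let the last group be its kernel. This kernel contains $\Qrad$ of $K_r\rtimes B^{(r)}$, because $\phi$ factors through the torsion-free quotient $H_1(K_r;\bZ)$. It has index $p$ and excludes $k$, which gives the required detection.

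The main obstacle is the bookkeeping in the iteration. We must make sure that the subgroups $K_{i+1}\rtimes B^{(i+1)}$ are normal in their predecessors, which follows from $B^{(i+1)}$ being normal in $B^{(i)}$ and $K_{i+1}$ being $B^{(i)}$-invariant. We must also make sure that the canonical lifts give a genuine homomorphism $B^{(i+1)}\to\Map(S_{i+1})$ compatible with the action on $\pi_1$. Theorem~\ref{lem:boundary-correction}(i) is designed to supply exactly this, so the remaining work is routine verification.
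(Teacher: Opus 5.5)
Your proposal is correct and follows essentially the same route as the paper. When $b\neq 1$ you pull back a detecting tower for $B$ along $K\rtimes B\to B$; when $b=1$ you combine the planar tower theorem with iterated boundary lifting, detect via $H_1(K_r\rtimes B^{(r)};\bZ)\cong H_1(K_r;\bZ)\oplus H_1(B^{(r)};\bZ)$, and append one mod-$p$ step in the $\RFRp$ case. The only cosmetic difference is that your final step is the kernel of a single character $\phi$ extended by zero on $B^{(r)}$, whereas the paper uses the kernel of $H\to H_1(H;\bZ)_{\mathrm{tf}}\otimes\bZ/p$; either choice works.
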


\begin{proof}
 By Lemma~\ref{lem:partial-criterion} and Lemma \ref{rfrp tower}, it suffices to show that every nontrivial element of \(K\rtimes B\) is detected by a partial $\RFRS$  (resp. $\RFRp$) tower. Let us deal with the $\RFRS$ case first and explain how to prove the $\RFRp$ case at the end. The key difference is that, instead of using Theorem \ref{thm:tower}, one has to use Theorem \ref{thm:tower-p} in the $\RFRp$ case due to the difference between the detection conditions for partial towers; see Remark \ref{rem:diff-tower}.

Let
\[
1\neq g=(f,b)\in K\rtimes B,
\qquad
f\in K,
\quad
b\in B.
\]
First suppose \(b\neq 1\). Since \(B\) is $\RFRS$, Lemma~\ref{lem:partial-criterion} gives a partial $\RFRS$ tower
\[
B=B_0\geq B_1\geq \cdots \geq B_N
\]
which detects \(b\). Define
\[
G_i=K\rtimes B_i.
\]
Then
\[
G=G_0\geq G_1\geq \cdots \geq G_N
\]
is a partial $\RFRS$ tower. Indeed, if \(x\in \Qrad(G_i)\), then the image of \(x\) in \(B_i\) lies in \(\Qrad(B_i)\), hence lies in \(B_{i+1}\). Thus
\[
\Qrad(G_i)\leq K\rtimes B_{i+1}=G_{i+1}.
\]
At the terminal stage, the projection \(G_N\to B_N\) sends \(g\) to \(b\), whose image in \(H_1(B_N;\bQ)\) is nonzero. Therefore \(g\) has a nonzero image in \(H_1(G_N;\bQ)\).

Now suppose \(b=1\), so \(g=f\in K\) and $m$ is any fixed integer greater than $1$. Applying Theorem~\ref{thm:tower},  we obtain arc-cyclic $m$-covers
\[
S_r\longrightarrow S_{r-1}\longrightarrow \cdots \longrightarrow S_0=S
\]
such that \(f\) lifts to closed curves throughout the tower and the final lift \(f_r\in \pi_1(S_r)\) has a nonzero image in \(H_1(S_r;\bZ)\).

Put \(K_i=\pi_1(S_i)\) and \(B_0=B\). We realize the topological tower as a partial $\RFRS$ tower of groups $K =K_0\geq K_1 \geq \cdots\geq K_r$. Suppose inductively that \(B_i\) acts isotopically on \(S_i\). Applying Theorem~\ref{lem:boundary-correction} to the arc-cyclic cover \(S_{i+1}\to S_i\), we obtain a finite-index normal subgroup \(B_{i+1}\leq B_i\) such that
\[
K_i\rtimes B_i
\geq
K_{i+1}\rtimes B_i
\geq
K_{i+1}\rtimes B_{i+1}
\]
is a partial $\RFRS$ tower, and \(B_{i+1}\) acts isotopically on \(S_{i+1}\). Concatenating these two-step towers gives a partial $\RFRS$ tower from \(K\rtimes B\) to
\[
K_r\rtimes B_r.
\]

At the final stage, \(B_r\) preserves every boundary component of the planar surface \(S_r\). Hence \(B_r\) acts trivially on \(H_1(K_r;\bZ)\). Therefore
\[
H_1(K_r\rtimes B_r;\bZ)
\cong
H_1(K_r;\bZ)\oplus H_1(B_r;\bZ).
\]
The element \(g=f\) maps to \(f_r\), which is nonzero in \(H_1(K_r;\bQ)\). Thus \(g\) is detected by the constructed partial tower. By Lemma~\ref{lem:partial-criterion}, \(K\rtimes B\) is $\RFRS$.

We briefly indicate the $\RFRp$ case.  Fix a prime \(p\).  If
\(b\neq 1\), choose a partial $\RFRp$ tower
\[
B=B_0\geq B_1\geq \cdots \geq B_N
\]
detecting \(b\), so \(b\in B_{N-1}\setminus B_N\).  Then
\(G_i=K\rtimes B_i\) gives a partial $\RFRp$ tower detecting
\(g=(f,b)\), by the same projection argument as above.

Now suppose \(b=1\), so \(g=f\in K\).  Instead of Theorem~\ref{thm:tower}, apply
Theorem~\ref{thm:tower-p} to obtain a tower of arc-cyclic \(p\)-covers with final lift
\(f_r\in K_r=\pi_1(S_r)\) satisfying
\[
[f_r]\neq 0\in H_1(S_r;\bZ/p).
\]
Applying Theorem~\ref{lem:boundary-correction} with \(m=p\) at each stage gives, exactly as above, 
a partial $\RFRp$ tower from \(K\rtimes B\) to
\(
H=K_r\rtimes B_r .
\)
Since \(B_r\) acts isotopically on \(S_r\), it acts trivially on
\(H_1(K_r;\bZ)\). Hence the image of \(f\) is nonzero in
\[
H_1(H;\bZ)_{\mathrm{tf}}\otimes \bZ/p .
\]
Let
\[
H^+=
\ker\left(
H\longrightarrow H_1(H;\bZ)_{\mathrm{tf}}\otimes \bZ/p
\right).
\]
Note that \(H/H^+\) is finitely generated elementary abelian \(p\), \(\Qrad(H)\leq H^+\), and
\(f\notin H^+\).  Appending the final step \(H\geq H^+\) gives a partial
 $\RFRp$ tower detecting \(f\).  Thus \(K\rtimes B\) is $\RFRp$.

\end{proof}

\section{Pure braid groups}\label{section:mainthm-proof}

Let $S_n$ be a  disk  with $n$ open disks removed; in particular, it is a connected, oriented, compact, planar surface with nonempty boundary. Let us denote the boundary coming from the disk by $\partial_0$, the remaining ones by $\partial_1,\cdots, \partial_n$, and pick a basepoint $x_0$ in $\partial_0$. Let $D_n$ be the surface obtained from $S_n$ by attaching $n$ punctured disks to all the boundaries that are not $\partial_0$, so $D_n$ is a disk with $n$ punctures. Then
\[
\pi_1(D_n,x_0)\cong \pi_1(S_n,x_0)\cong F_n,
\]
a free group of rank \(n\).

The Fadell--Neuwirth fibration obtained by forgetting the last puncture of $D_n$ gives a split exact sequence
\begin{equation}\label{eq:FN}
1\longrightarrow F_{n-1}\longrightarrow \PMap(D_n) \cong \PB_n
\longrightarrow \PMap(D_{n-1}) \cong\PB_{n-1}\longrightarrow 1.
\end{equation}
Thus
\[
\PB_n\cong F_{n-1}\rtimes \PB_{n-1}.
\]
The action of \(\PB_{n-1}\) on \(F_{n-1}\) is the usual monodromy action of pure mapping classes of $D_{n-1}$. Recall that $\Map(S_n)$ is just the pure ribbon braid group; see for example \cite[Lemma 3.21]{SkiWu25} for a proof. So we  have an  isomorphism $\Map(S_n) \cong \PMap(D_n) \times \bZ^n \cong PB_n \times \bZ^n$, where $\bZ^n$ is generated by Dehn twists along boundaries that are not $\partial_0$. Hence $PB_n$ can be viewed as a subgroup of $\Map(S_n)$, so it acts isotopically on $S_n$. Note that this further induces an isotopical action on $D_n$, hence an action on $\pi_1(D_n,x_0) \cong F_n$. By choosing the embedding properly, this action also coincides with the  usual monodromy action of pure mapping classes of $D_{n}$.

\begin{proof}[Proof of Theorem \ref{athm-rfrs}]
We argue by induction on \(n\). The group \(\PB_1\) is trivial, and
\[
\PB_2\cong \bZ,
\]
so the result is immediate for \(n\leq 2\).

Assume \(n\geq 3\), and assume that \(\PB_{n-1}\) is   $\RFRS$ (resp. $\RFRp$). Now in  the Fadell--Neuwirth splitting \eqref{eq:FN},
\[
\PB_n\cong F_{n-1}\rtimes \PB_{n-1}.
\]
We can view $F_{n-1}$ as $\pi_1(S_{n-1})$, where \(S_{n-1}\) is obtained from $S_n$ by attaching a disk to the boundary $\partial_n$, and \(\PB_{n-1} \leq PB_n\) acts isotopically on \(S_{n-1}\). Proposition~\ref{prop:semidirect} therefore applies and shows that \(\PB_n\) is   $\RFRS$ (resp. $\RFRp$). This completes the induction.
\end{proof}

\begin{proof}[Proof of Corollary \ref{cor-artin-rfrs}]
Let $p$ be any prime. The braid groups contain the pure braid groups as a finite-index subgroup, so they are virtually $\RFRp$. The Artin group of type $B_n=C_n$ embeds into the braid groups (in fact Artin groups of type $A_{2n}$) \cite[Table 1]{Cri99}, so they are virtually $\RFRp$ since being $\RFRp$ passes to subgroups \cite[Theorem 4.3]{KoberdaSuciu2020}. The Artin groups of type $I_2(n)$ are well known to be virtually special, see for example \cite[Lemma 4.3]{HKP16}. In \cite[p. 327]{ChCri05}, it was proved that $A_{B_n} \cong A_{\tilde{A}_{n-1}} \rtimes \bZ$. Therefore, Artin groups of $\tilde{A}_n$ embed into Artin groups of type $B_{n+1}=C_{n+1}$, and they are also virtually $\RFRp$. It was also proved in  \cite[\S 2]{ChCri05} that Artin groups of type $\tilde{C}_n$  embed in the mapping class group $\Map(\Sigma_{n+3})$, where $\Sigma_{n+3}$ is the $2$-sphere with $(n+3)$ punctures. Note now that $\PMap(\Sigma_{n+3}) \times \bZ \cong PB_{n+2} $ \cite[p. 252]{FaMa11}, so Artin groups of $\tilde{C}_n$ virtually embed in $PB_{n+2}$. Hence they are virtually $\RFRp$.

\end{proof}

\bibliographystyle{alpha}
\bibliography{references.bib}

\end{document}